\newtheorem{theorem}{Theorem}[section]
\newtheorem{lemma}[theorem]{Lemma}
\newtheorem{corollary}[theorem]{Corollary}
\newtheorem{proposition}[theorem]{Proposition}
\theoremstyle{definition}
\newtheorem{definition}[theorem]{Definition}
\newtheorem{remark} [theorem]{Remark}
\theoremstyle{remark}
\newtheorem{Remark}[theorem]{\rm \bf Remark}
\numberwithin{equation}{section}
\newcommand{\C}{\mathbb{C}}
\newcommand{\R}[1]{\mathbb{R}^{#1}}
\newcommand{\bN}{\mathbb{N}}
\newcommand{\fa}{\mathfrak{a}}
\newcommand{\fg}{\mathfrak{g}}
\newcommand{\fh}{\mathfrak{h}}
\newcommand{\fn}{\mathfrak{n}}
\newcommand{\fp}{\mathfrak{p}}
\newcommand{\fq}{\mathfrak{q}}
\newcommand{\aq}{\mathfrak{a}_q}
\newcommand{\aqc}{\mathfrak{a}_{q, \C}^\ast}
\newcommand{\cF}{\mathcal{F}}
\newcommand{\cW}{\mathcal{W}}
\newcommand{\cO}{\mathcal{O}}
\newcommand{\beas}{\begin{eqnarray*}}
\newcommand{\eeas}{\end{eqnarray*}}
\newcommand{\bes} {\begin{equation*}}
\newcommand{\ees} {\end{equation*}}
\newcommand{\be} {\begin{equation}}
\newcommand{\ee} {\end{equation}}
\newcommand{\bea} {\begin{eqnarray}}
\newcommand{\eea} {\end{eqnarray}}
\let\oldproofname=\proofname
\renewcommand{\proofname}{\rm\bf{\oldproofname}}
\newcommand{\ia}{i\mathfrak{a}^*}
\newcommand{\eE}{E^{\circ}}
\newcommand{\der}{\frac{d}{d\lambda}}
\renewcommand{\l}{\lambda}
\newcommand{\ti}[1]{\widetilde{#1}}
\newcommand{\mr}[1]{\mathrm{#1}}
\title{Boundedness of Eisenstein integrals on  split rank one semisimple symmetric spaces}
\author{Sanjoy Pusti and Iswarya Sitiraju}
\address{Sanjoy Pusti \endgraf Department of Mathematics, \endgraf INDIAN INSTITUTE OF TECHNOLOGY BOMBAY, \endgraf Powai, Mumbai-400076, India.}
\email{sanjoy@math.iitb.ac.in}
\address{Iswarya Sitiraju, \endgraf Department of Mathematics, \endgraf INDIAN INSTITUTE OF TECHNOLOGY BOMBAY, \endgraf Powai, Mumbai-400076, India.}
\email{iswarya@math.iitb.ac.in}
\subjclass[2010]{Primary 43A85, 43A90; Secondary 33C67, 22E30}
\keywords{Semisimple symmetric spaces, Pseudo Riemannian real hyperbolic spaces, Eisenstein integral, Helgason-Johnson theorem}
\begin{document}

\begin{abstract}
    We characterise the bounded left $K$-invariant normalized Eisenstein integrals on split rank one semisimple symmetric spaces. As a consequence we prove Hausdorff-Young inequality on these spaces. We also prove similar result for $K$-finite Eisenstein integrals on pseudo-Riemannian real hyperbolic spaces.
\end{abstract}

\maketitle

\section{Introduction}
Let $G$ be a connected semisimple Lie group with finite center and $K$ a maximal compact subgroup of $G$. Let $\phi_\l$ be the elementary spherical functions on the Riemannian symmetric space $G/K$, with spectral parameter $\l$ in the complexified dual space denoted by $\fa^*_{\C{}}$.
A celebrated theorem of Helgason-Johnson (\cite{HJ69}) states that there exists a tube domain $S_1$ in the complexified dual space $\fa^*_{\C{}}$ such that $\phi_\l$ is bounded on $G/K$ if and only $\l$ belongs to $S_1$. Moreover, $|\phi_\l(x)| \leq 1$ for $\l \in S_1$ and for all $x\in G$. In other words, the theorem characterises the bounded elementary spherical functions on $G/K$. As a consequence, it follows that  for a left $K$-invariant integrable function $f$ on $G/K$, the spherical Fourier transform $\cF f(\l)$ is  holomorphic in the interior $S_1^\circ$ and  continuous on $S_1$. Also, for $f\in L^1(K\backslash G/K)$, we have
\[|\cF f(\l)| \leq \|f\|_1, \qquad \l \in S_1.\]

This result was generalized to $L^r$ functions in \cite{EK82} for $1< r<2$.
A generalized Hausdorff-Young inequality and a generalized Riemann Lebesgue lemma has been shown to hold in  $S_r^\circ$ in \cite{EK82} and a variant of Hausdorff-Young inequality for spherical functions has been shown in \cite{CM93}. Similarly, a Hausdorff-Young inequality was  proved for $K$-finite $L^p$ functions on $G/K$ in the domain $S_r$ in \cite{EKS87}. Analogue of this Helgason-Johnson theorem is proved for the Jacobi functions $\varphi_{-i\l}^{(\alpha, \beta)}$ for $\alpha\geq\beta\geq -\frac{1}{2}$ by Flensted-Jensen and Koornwinder (\cite{FT73}).

\vspace{.4cm} 

Now, consider the triple $(\fa,\Sigma, m)$, where $\fa$ is a finite dimensional real Euclidean vector space, a root system $\Sigma$ and a multiplicity function $m$ on $\Sigma$. Let $\phi_\l$ be the Heckmann-Opdam hypergeometric functions associated with the triple $(\fa, \Sigma, m)$. The Helgason-Johnson theorem has been extended to this case (in \cite{NPP14}). In this case, we have
 a tube $S_1$ in the complexified dual vector space $\fa^*_{\C{}}$ such that,  the function $\phi_\l$ is bounded on $\fa$ if and only if $\l \in S_1$. Moreover, the Fourier transform satisfies the generalized Riemann-Lebesgue lemma on $S_1$ and also a Hausdorff-Young inequality holds on a tube domain $S_r$ depending on $r$ with $1 \leq r \leq 2$. If the triple $(\fa, \Sigma, m)$ is associated with the Riemannian symmetric space $G/K$, then the the function $\phi_\l$ coincides with the elementary spherical function on $G/K$ and thus, the domain in which they are bounded is the same. This result also proved for certain type of Heckman-Opdam hypergeometric functions for the root system BC (in \cite{NP22}).
\vspace{.5cm} 

 Let $G$ be a real connected semisimple linear Lie group and $H$ be a connected subgroup such that $(G,H)$ is a symmetric pair. Let $K$ be the maximal compact subgroup of $G$.  The Eisenstein integrals are defined as the  matrix coefficients of $K$-finite vectors with $H$-fixed vectors of parabolically induced representations. These functions were first introduced in \cite{vdB92} and also studied in \cite{VS97,VS97-2}. The left $K$ invariant Eisenstein integrals are the generalizations of the elementary spherical functions on the Riemannian symmetric spaces that is, when $H=K$ (cf. \cite{HC58}) and the spherical functions introduced by Oshima and Sekiguchi for the $K_\epsilon$ symmetric spaces introduced in \cite{OS80}. \\

Let $(\mathfrak{a}_q,\Sigma,m)$ be the root system associated with $(G,H,K)$ and $W$ be the Weyl group corresponding to the root system $(\mathfrak{a}_q,\Sigma)$.  Let $\cW$ be a specific subgroup of the Weyl group $W$. For a vector $\eta$ in $\C^{\cW}$ and the spectral parameter $\l \in \mathfrak{a}_{q, \C}^*$, the Eisenstein integrals are denoted by $E(\l,\eta)(\cdot)$. For $x \in G/H$, the function $\l \mapsto E(\l,\eta)(x)$ can be meromorphic in the complexified dual space $\fa_{q,\C}^*$. Hence, $E$ is normalized using their asymptotic behavior such that there are no singularities on $i\mathfrak{a}_q^*$ and is denoted as $\eE$.  For the Riemannian case , the Eisenstein integral is the elementary spherical function $\phi_\lambda$ and the normalized Eisenstein integral is $\phi_\l/c(\l)$, where $c(\l)$ is the Harish-Chandra c-functions.
In this article, we will study the normalized Eisenstein integrals on the split rank one semisimple symmetric spaces.
\vspace{.4cm}

  Let $x_0 = eH$ be the base point of $G/H$. For a fixed $w \in \cW$ and $\eta \in \C^{\cW}$, the normalised Eisenstein integral $\eE_w(\lambda,\eta)(t) := \eE_w(\lambda,\eta)(a_t w\cdot x_0)$  
is given by 
\begin{equation}\label{eq:eisen1}
    \eE_w(\lambda,\eta)(t) = \eta_w 2^{\lambda-\rho}\frac{\Gamma\left(\frac{\rho+\lambda}{2}\right)\Gamma\left(\frac{-\rho+\lambda+m_1^++m_2^++1}{2}\right)}{\Gamma(\lambda)\Gamma\left(\frac{m_1^++m_2^++1}{2}\right)} {}_2F_1\left(\frac{\rho+\lambda}{2}, \frac{\rho-\lambda}{2}; \frac{m_1^++m_2^++1}{2}; -\sinh ^2 t\right).
\end{equation}

 This normalized Eisenstein integrals $\eE_w(\l,\eta)$ are not entire functions. In fact, for each $R>0$, the functions $\lambda\mapsto \eE_w(\l,\eta)(t)$ has finitely many simple poles in $-a^\ast(R)$, where 
 \bes
a^\ast(R)=\{\lambda\in \mathbb C \mid \Re\lambda \leq R\}.
\ees
Moreover, there exists a polynomial $p_R$ (which is a finite product of linear polynomials) such that the function $\lambda\mapsto p_R(\lambda)\eE_w(\l,\eta)(t)$ is holomorphic on $-a^\ast(R)$. Then we characterise all those $\lambda\in -a^\ast(R)$, for which the function $t\mapsto p_R(\lambda)\eE_w(\l,\eta)(t)$ is bounded on $(0,\infty)$ (see Theorem \ref{thm: HJO} in Section 4).

\vspace{.4cm}

Using \cite[Lemma 2.1]{FT73} it follows that for $m_1^+\geq m_1^- + 2m_2^-$,  the function 
\bes 
t\mapsto {}_2F_1\Big(\frac{\lambda+\rho}{2}, \frac{-\lambda +\rho}{2};\frac{m_1^++m_2^++1}{2}; -\sinh^2{t}\Big)
\ees
is bounded on $(0, \infty)$ if and only if $|\Re\lambda|\leq \rho$. Thus, this result and (\ref{eq:eisen1}) gives characterisations of all $\lambda$ for which $t\mapsto p_R(\lambda)\eE_w(\l,\eta)(t)$ is bounded on $(0,\infty)$, only for the case when $m_1^+\geq m_1^- + 2m_2^-$. But for the case $m_1^+< m_1^- + 2m_2^-$, we will not be able to use the result of Flensted-Jensen and Koornwinder \cite{FT73}. In this case,  we mainly adapt the methods of \cite{NPP14}. First we get a series expansion of the normalized Eisenstein integrals at any $\lambda$ and using this we characterise all the functions  $t\mapsto p_R(\lambda)\eE_w(\l,\eta)(t)$ that are bounded.

\vspace{.5cm} 

Then as an application of the theorem we proved Hausdorff-Young inequality (see Theorem \ref{thm:HY} in section $5$) for this case. Also, we obtain similar (partial) results for the left $K$-finite normalized Eisenstein integral (see Theorem \ref{thm:ktypehj} and Theorem \ref{thm:HYktype} in section $6$) for the (pseudo-Riemannian) real hyperbolic spaces $SO_e(p,q)/SO_e(p-1,q)$.
We note that $\mathcal W=\{1\}$ for the case when $q>1$ and $\mathcal W=\{\pm 1\}$ for the case when $q=1$. Therefore, there is only one orbit of $\sigma$-minimal parabolic subgroup on the (pseudo-Riemannian) real hyperbolic spaces for $q>1$ and two disjoint orbits of $\sigma$-minimal parabolic subgroup for $q=1$. This makes the structure and analysis different for these two cases.
We will provide the required preliminaries of these cases separately in section $6$ and then unify them to prove the results. 

\vspace{.4cm}

We organise the paper as follows: In the preliminaries section, we first give necessary preliminaries for the general semisimple symmetric spaces $G/H$ and then give preliminaries for the split rank one semisimple symmetric spaces. In the next section we deduce explicit formula of the Harish-Chandra $c$-function.  In section $4$, we prove the analogue of Helgason-Johnson theorem on split rank one semisimple symmetric spaces. In section $5$, we prove Hausdorff-Young inequality and Riemann-Lebesgue lemma. Finally, in section $6$, we prove similar results for fixed left $K$-type normalized Eisenstein integrals on pseudo-Riemannian real hyperbolic spaces $\mathrm{\mathrm{SO}}_e(p,q)/\mathrm{\mathrm{SO}}_e(p-1, q)$.

\begin{comment}

In the rank one case, that is, when dimension of $\fa$ is one, for any $m_1, m_2>0$, the function $\phi_\l$ coincides with Jacobi functions which is given by
\be \label{eq:Hypergeometric}
\phi_\l(t) = {}_2F_1\left(\frac{\frac{m_1}{2}+ m_2+ \lambda}{2}, \frac{\frac{m_1}{2}+ m_2+ \lambda}{2},\frac{m_1+ m_2+ 1}{2}, -\sinh^2 t\right).
\ee
Then this is bounded on $(0, \infty)$ if and only if $|\Re\lambda|\leq \frac{m_1}{2} + m_2$. That is, the domain of boundedness is given by $S_1 = \{ \l \in \fa_{\C{}}^* : |\Re\lambda|\leq \frac{m_1}{2} + m_2\}$.

\end{comment}

\section{Preliminaries}
Throughout the article we shall use the standard notations $\mathbb N, \mathbb N_0,\mathbb Z,  \mathbb R, \mathbb C$ for Natural numbers, nonnegative integers, integers, real numbers and complex numbers, respectively. Given two
nonnegative functions $f$ and $g$ on a domain D, we write $f \asymp g$ if there exists positive
constants $C_1$ and $C_2$ so that $C_1g(x) \leq f(x) \leq C_2g(x)$ for all $x \in D$.

\vspace{.4cm}
In this section, we describe the necessary preliminaries regarding  harmonic analysis on semisimple symmetric spaces. These are standard and can be found, for example, in \cite{A01, VS97, VS97-2,  HS94, R79}. To make the article self-contained, we shall gather only those results which will be used throughout this paper.

\vspace{.4cm}

Let $G$ be a connected semisimple linear Lie group, $H$ a connected subgroup of $G$ and $K$ be a maximal compact subgroup of $G$. Let $\theta$ be the Cartan involution and $\sigma$ be a involution such that $H=G^\sigma_e,$ where $G^\sigma$ is the subgroup of fixed points for $\sigma$ and $G^\sigma_e$ is the identity component of $G^\sigma$. We  assume that $\sigma\theta=\theta\sigma$. Let \bes \mathfrak{g}=\mathfrak{h}\bigoplus \mathfrak{q}, \ees be the decomposition of $\mathfrak{g}$ induced by $\sigma$.  Also let \bes \fg=\mathfrak{k}\oplus\fp, \ees be the Cartan decomposition.   Let \bes \fg_+=\mathfrak{k}\cap\fh \bigoplus \fp\cap\fq \text{ and } \fg_-=\mathfrak{k}\cap \fq \bigoplus \fp\cap \fh.\ees Then $\fg=\fg_+ \bigoplus \fg_-$ and $G_+=G^{\sigma\theta}_e$ has Lie algebra $\fg_+$ with maximal compact subgroup $K\cap H$. Let $\aq$ be the maximal abelian subspace of $\fp\cap\fq$ and $A_q=\exp{\aq}$. Let $\Sigma(\aq, \fg_+)$ be the corresponding set of restricted roots and 
let $W_{K\cap H}$ be the Weyl group associated with the root system $\Sigma(\aq, \fg_+)$. Let $\aq^+$ be the corresponding positive Weyl chamber and $A_q^+=\exp {\aq^+}$.
Let $\mathfrak{a}$ be a maximal abelian subspace of $\fp$ containing $\aq$, then $\mathfrak{a}\cap \fq=\aq$. We denote the real dual of $\fa_q$ by $\fa_q^*$.  
For $\alpha\in\aq^\ast$, we define \bes
\fg_\alpha=\left\{Y\in\fg\mid [H, Y]=\alpha(H)Y \text{ for all } H\in \aq\right\}.
\ees
Let $\Sigma=\Sigma(\aq, \fg)$ be the set of $\alpha\in \aq^\ast$ such that $\fg_\alpha\not=\{0\}$. We then have $\Sigma(\aq, \fg_+)\subseteq \Sigma(\aq, \fg)$. 
Let $\fg_\alpha^{\pm}=\fg_\alpha\cap \fg_\pm$. 
Let \bes W=W(\aq, \fg)=N_K(\aq)/Z_K(\aq),\ees be the Weyl group of $\aq$ in $\fg$ associated with the root system $\Sigma(\aq, \fg)$. Let \bes \mathcal W=N_{K\cap H}(\aq)\backslash N_K(\aq)/Z_K(\aq)=W/W_{K\cap H}.\ees

 Let $m_\alpha = \mr{dim} \fg_\alpha$ be the multiplicity of $\alpha$ and let $m_\alpha^\pm=\dim \fg_\alpha^\pm$. Then $m_\alpha=m_\alpha^+ + m_\alpha^{-}$ and $m_\alpha^+$ is the multiplicity of $\alpha$ as a member of $\Sigma(\aq, \fg_+)$. Let \be\label{jacobian} J(a)=\prod_{\alpha\in \Sigma^+(\aq, \fg) } (a^\alpha-a^{-\alpha}) ^{m_\alpha^+}(a^\alpha+a^{-\alpha})  ^{m_\alpha^-},
\ee
for $a \in A_q$. We define \bes
\rho=\frac{1}{2}\sum_{\alpha\in \Sigma^+(\aq, \fg)}m_\alpha \alpha\in \aq^\ast.
\ees
Every element $g\in G$ has a decomposition as \be \label{decomposition} g=kah, k\in K, a\in A_q,  \text{ and } h\in H,
\ee and $a$ is unique upto $W_{K\cap H}$.
Consider the set \[X_+ = \cup_{w \in \cW}KA_q^+w H.\] When viewed as a subset of $G/H$, the set $X_+$ is open and dense. Then there exists a unique Haar measure on $A_q$ such that for $f \in L^1(X)$
\begin{equation}\label{eq:measure}
    \int_{G/H} f(x)dx = \sum_{w\in \cW}\int_K \int_{A_q^+}f(kawH)J(a)da dk,
\end{equation}
where $dk$ is the normalized Haar measure on $K$ (see \cite[Thm 3.9, p.24]{AO05}). 
We will denote the complexification of $\mathfrak{a}_{q}$ as $\mathfrak{a}_{q,\C{}}$ and the dual of $\fa_{q,\C{}}$ as $\fa_{q,\C{}}^*$.

The $\sigma$-minimal parabolic subgroup $P$ is a parabolic subgroup satisfying $\sigma \theta P = P$ and is minimal among all other parabolic subgroups satisfying this identity.  We then have the following theorem \cite[Thm. 13]{R79}:
\begin{theorem}
    Let $P$ be a $\sigma$-minimal parabolic subgroup of G and $\cW$ be as defined above.  Then
    \begin{enumerate}
        \item For each $\omega \in  \cW$, the orbit $\cO_\omega = P\omega H$ is open in $G$.
        \item The orbits $\cO_\omega$ are mutually disjoint.
        \item The disjoint union $\bigcup_{\omega \in \cW}\, \cO_{\omega}$ is dense in $G$.
    \end{enumerate}
\end{theorem}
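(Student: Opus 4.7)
The plan is to handle the three parts respectively by a differential-surjectivity argument, an Iwasawa-type separation, and a dimension count built on the first two. For (1), fix a representative $\omega \in N_K(\aq)$ of a class in $\cW$ and consider the smooth map $\mu_\omega \colon P \times H \to G$, $(p,h) \mapsto p\omega h$, whose image is $\cO_\omega$. It suffices to show that $d\mu_\omega$ at $(e,e)$ is surjective; after right-translating by $\omega$ this amounts to proving $\fp + \mr{Ad}(\omega)\fh = \fg$. The Levi-Langlands decomposition $\fp = Z_\fg(\aq) \oplus \bigoplus_{\alpha \in \Sigma^+} \fg_\alpha$ shows that $\fg/\fp$ is modelled on $\bigoplus_{\alpha \in \Sigma^+} \fg_{-\alpha}$. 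Since $\aq \subset \fp\cap\fq$, the automorphism $\sigma\theta$ fixes $\aq$ pointwise, so it preserves each root space and yields $\fg_\alpha = \fg_\alpha^+ \oplus \fg_\alpha^-$. For any $X \in \fg_\alpha$, the combination $X+\sigma X$ lies in $\fh$ and its $\sigma X$ piece sits in $\fg_{-\alpha}$. Since $\mr{Ad}(\omega)$ permutes root spaces by the Weyl action $\alpha \mapsto \omega \alpha$, a root-by-root check shows that $\{\mr{Ad}(\omega)(X+\sigma X) : X\in\fg_\alpha\}$ projects onto each summand of $\fg/\fp$, which gives surjectivity.

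For (2), suppose $p_1\omega_1 h_1 = p_2\omega_2 h_2$, so $p_2^{-1}p_1 = \omega_2 (h_2 h_1^{-1}) \omega_1^{-1}$. Expand the right-hand side using an Iwasawa-type decomposition of $H$ adapted to the $\aq$-direction, and the left-hand side via the Langlands decomposition of $P$. Matching $A_q$-components and using $\fh\cap\aq = \{0\}$ (since $\aq \subset \fq$) forces $\omega_2^{-1}\omega_1 \in Z_K(\aq)\,N_{K\cap H}(\aq)$, so $[\omega_1] = [\omega_2]$ in $\cW = W/W_{K\cap H}$. Part (3) then follows: by (1) each $\cO_\omega$ is open of full dimension, by (2) the $\cO_\omega$ are disjoint, and any $(P,H)$-double coset outside the finite collection $\{\cO_\omega\}_{\omega\in\cW}$ has tangent space a proper subspace of $\fg$ and hence strictly smaller dimension; thus the open set $\bigcup_{\omega\in\cW}\cO_\omega$ has complement of empty interior and is therefore dense.

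The main obstacle is the surjectivity in (1). The $\sigma$-minimality of $P$ is used essentially: it guarantees that the roots defining $\fp$ are exactly those of $\Sigma(\aq,\fg)$, so that $\sigma$ sends $\fg_\alpha$ to $\fg_{-\alpha}$ and the root-space argument aligns with the $\sigma$-decomposition of $\fh$. A more subtle variant of the root-space counting is needed when the Weyl action of $\omega$ interacts nontrivially with $W_{K\cap H}$, but that is precisely what passage to the double quotient $\cW = N_{K\cap H}(\aq)\backslash N_K(\aq)/Z_K(\aq)$ absorbs.
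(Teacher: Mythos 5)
First, note that the paper does not prove this statement at all: it is quoted verbatim from Rossmann \cite[Thm.~13]{R79}, so there is no internal proof to compare against. Judged on its own terms, your part (1) is correct and is the standard argument: openness of $P\omega H$ reduces by equivariance to surjectivity of the differential at $(e,e)$, i.e.\ to $\fp+\mr{Ad}(\omega)\fh=\fg$, and this follows because $\sigma$ maps $\fg_\alpha$ onto $\fg_{-\alpha}$ (as $\sigma=-1$ on $\aq$), so that for $Z\in\fg_{-\beta}$, $\beta\in\Sigma^+$, setting $Y=\mr{Ad}(\omega)^{-1}Z$ and $X=\sigma Y$ one has $Z=\mr{Ad}(\omega)(X+\sigma X)-\mr{Ad}(\omega)X\in\mr{Ad}(\omega)\fh+\fp$. (The decomposition $\fg_\alpha=\fg_\alpha^+\oplus\fg_\alpha^-$ you invoke is not actually needed here.)

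Parts (2) and (3), however, contain genuine gaps. In (2), the step ``expand $\omega_2(h_2h_1^{-1})\omega_1^{-1}$ using an Iwasawa-type decomposition of $H$ adapted to the $\aq$-direction and match $A_q$-components'' is not a well-defined operation: since $\aq\subset\fq$ one has $\fh\cap\aq=\{0\}$, so $H$ has no Iwasawa decomposition involving $A_q$, and the element $\omega_2 h\omega_1^{-1}$ carries no canonical ``$A_q$-component'' that could be compared with the Langlands $A$-component of $p_2^{-1}p_1\in P$. The actual proof that distinct classes in $\cW=N_{K\cap H}(\aq)\backslash N_K(\aq)/Z_K(\aq)$ give distinct double cosets is the substantive content of Rossmann's theorem and requires a different mechanism (e.g.\ the analysis of $g\mapsto g\sigma(g)^{-1}$, or the classification of $\sigma$-stable maximally split abelian subspaces up to $H$-conjugacy); it cannot be read off from a formal decomposition matching. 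In (3), your key assertion that \emph{every} $(P,H)$-double coset outside $\{\cO_\omega\}_{\omega\in\cW}$ is non-open is precisely the classification of open orbits, which you assume rather than prove; without it, density of $\bigcup_\omega\cO_\omega$ does not follow (an unaccounted-for open double coset would sit in the complement). There is also a secondary issue: even granting that all remaining orbits are lower-dimensional, one needs either finiteness of the orbit set or a real-analyticity argument (the set $\{x:\fp+\mr{Ad}(x)\fh\neq\fg\}$ is a proper real-analytic subvariety of the connected group $G$, hence has empty interior) to rule out that an uncountable union of lower-dimensional orbits fills an open set. In short: (1) stands, but (2) and (3) are the hard parts of the theorem and your argument does not establish them; for the purposes of this paper it is legitimate simply to cite \cite{R79}.
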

Let $P = MAN$ be the Langlands decomposition. Let $\pi_{\xi, \l}= \mr{Ind}_{P}^G(\xi \otimes \l \otimes 1)$ be the parabolically induced representations on the space of continuous functions $C(K;\xi)$. The left K-invariant Eisenstein integrals, denoted by $E(\l)$, are linear combinations of matrix coefficients for the induced representations $\pi_{1,\l}$ with $\xi = 1$ and $\l \in \C$. Let $1_{-\l}$ be the $K$-fixed vector as a constant function $1$ on $K$. For $\eta \in \C^{\cW}$, $\l \in \C$ such that $\Re \l > \rho$ we define
\[j(\l,\eta) ( x) = \begin{cases}
    a^{\l-\rho}\eta_w &\text{if} \; x = man\,wh \in \cO_w\\
    0 &\text{otherwise},
\end{cases} \]
where $\eta_w$ is the $w$-th coordinate of $\eta$.
 Observe that $j(\l,\eta) \in C^{-\infty}(K;1)^H$ is an $H$-fixed distribution of $\pi_{\l,1}$ for $\l \in \C$ such that $\Re \l > \rho$. It was proven in \cite[Thm. 5.10, p.83]{vDB88} that
\begin{theorem}
    The map $\l \mapsto j(\l,\eta)$ can be extended as a meromorphic function on $\C$ for all $\eta \in \C^\cW$. Moreover, for almost all $\l$, the map $j(\l,\cdot) : \C^{\cW} \rightarrow C^{-\infty}(K,1)^H$ is a bijection.
\end{theorem}
The proof of this theorem is given in \cite[Thm. 5.10, p.83]{vDB88} and \cite{Olafsson1987}. Thus, the Eisenstein integrals are defined as 
\[E(\l,\eta)(x) = \langle 1_{-\l}, \pi_{\l,1}(x)j(\l,\eta)\rangle_{L^2(K)}.\]

The function $\l \mapsto E(\l,\eta)(x)$ may have singularities on $i\R{}$. Thus, we consider a normalization by their behavior at infinity which regularizes these functions on $i\R{}$. We denote these normalized Eisenstein integral as $\eE(\l, \cdot)(\cdot)$.

For $\lambda\in \fa^\ast_{q, \C}, \eta\in \C^{\mathcal W}$,
 Eisenstein integrals $E(\lambda,\eta)$ and normalized Eisenstein integrals  $E^\circ(\lambda,\eta)$ are left $K$-invariant and right $H$-invariant meromorphic functions in $\lambda$. These functions have the following properties:
\begin{enumerate}
\item These Eisenstein integrals and normalized Eisenstein integrals are $K$-invariant eigenfunctions for the invariant differential operators on $G/H$. 
More precisely, we have
    \be \label{diff-eins}
    DE(\lambda,\eta)=\gamma_q(D:\lambda)E(\lambda,\eta),
    \ee
    for all $D\in \mathbb D(G/H), \lambda\in \aqc$. Here $\gamma_q$ is the algebra homomorphism between $\mathbb D(G/H)$ and $\mathcal S(\aq)^W$.
      \item These are analytic functions on $G/H$.
    \item The functions $\l \mapsto E(\l , \eta)(x),E^\circ(\l , \eta)(x)$ are meromorphic on $ {\mathfrak{a}^\ast_{q, \C}}$, for fixed $\eta$ and $x$.
    \item The Eisenstein integral is normalized in such way that $E^\circ(\cdot , \eta)$ has no singularities on $i\aq^\ast$. 
     \item $\overline{E^\circ(-\overline{\lambda},\eta)(x)}=E^\circ(-\lambda,\overline{\eta})(x)$.
    \end{enumerate} 
Let $\mathfrak{a}_1$ be a maximal abelian subspace of $\fq$. The dimension of $\fa_1$ is called {\em rank} of the semisimple symmetric spaces $G/H$. 
 The following are the only rank one connected non-Riemannian symmetric spaces up to covering (for case by case treatment of Plancherel theorem see \cite{vD86, Molchanov1986} and references therein).
\begin{enumerate}
\item The real hyperbolic spaces: $\mathrm{\mathrm{SO}}_e(p, q)/\mathrm{\mathrm{SO}}_e(p-1, q), p>1, q>0$
\item The complex hyperbolic spaces: $\mathrm{SU}(p, q)/S\left(\mathrm{U}(1)\times\mathrm{U}(p-1, q)\right), p>1, q>0$.
\item The quaternion hyperbolic spaces: $\mathrm{Sp}(p, q)/\left(\mathrm{Sp}(1)\times\mathrm{Sp}(p-1, q)\right), p>1, q>0$.
\item The octonion hyperbolic space: $F_{4(-20)}/Spin(1, 8)$.
\item $\mathrm{SL}(n, \mathbb R)/\mathrm{GL}_+(n-1, \mathbb R)$, $n>1$.
\item $\mathrm{Sp}(n, \mathbb R)/\left(\mathrm{Sp}(1, \mathbb R)\times\mathrm{Sp}(n-1, \mathbb R)\right), n>1$.
\item $F_4(4)/Spin(4, 5)$.
\end{enumerate} 
The dimension of $\fa_q$ is called {\em split rank} of the semisimple symmetric spaces $G/H$. For a complete list of split rank one connected non-Riemannian symmetric spaces we refer to \cite[Table. II, p. 462]{OS84}.
\begin{Remark}
    In the case of Riemannian symmetric spaces, we have $H=K$, $\fp=\fq$ and hence $\fg_+=\fg, \,\,\fg_-=0$. This implies $\fa=\aq$  and $\Sigma(\aq, \fg)=\Sigma(\aq, \fg_+)$. So $W_{K\cap H}=W$ and hence $\mathcal W=\{1\}$. Also the Eisenstein integrals and normalized Eisenstein integrals becomes elementary spherical functions $\phi_\lambda$ and $c(\l)^{-1}\phi_\lambda$ respectively, where $c(\l)$ is the Harish-Chandra $c$-function. 
\end{Remark}
 For $R>0$, we define,
\bes
\aq^\ast(R)=\{\lambda\in \mathfrak{a}^\ast_{q,\C}\mid \Re\langle \lambda, \alpha\rangle \leq R \text{ for all } \alpha\in \Sigma^+(\aq, \fg) \}.
\ees
Let $S$ be the union of all the hyperplanes given by $\sigma_\mu=\{\lambda\in\mathfrak{a}_{q, \C}^*\mid \langle 2\lambda-\mu, \mu\rangle=0\}$, for $\mu\in \mathbb N_0(\Sigma^+(\aq, \fg))$. For $\lambda\not\in S$, there exists Harish-Chandra series
\begin{align*}
    \Phi_\lambda(a)=a^{\lambda-\rho}\sum_{\mu\in \mathbb N_0(\Sigma^+(\aq, \fg) )} a^{-\mu}\Gamma_\mu(\lambda),
\end{align*}
on $A_q^+$ with $\Gamma_\mu(\lambda)\in\C, \Gamma_0=1$ which solves the equation (\ref{diff-eins}).
Then the set \bes 
X_R=\{\mu\in \mathbb N_0(\Sigma^+(\aq, \fg))\mid \sigma_\mu\cap \aq^\ast(R)\not=\emptyset\},
\ees
is finite. 
Let $q_R(\lambda)$ be the polynomial \bes
q_R(\lambda)=\prod_{\mu\in X_R}\langle 2\lambda-\mu, \mu \rangle. 
\ees
Then $q_R(\lambda)\Phi_\lambda(a)$ is holomorphic as a function of $\lambda\in \aq^\ast(R)$. Moreover, for each $\epsilon>0$, there exists $M>0$ such that \bes
|q_R(\lambda) \Phi_\lambda(a)| \leq M (1 + |\lambda|)^c a^{|\Re\lambda|-\rho},
\ees
for all $a\in A_q$ with $\alpha(\log a) >\epsilon$ for all $\alpha\in \Sigma^+(\aq, \fg)$ and all $\lambda\in \aq^\ast(R)$.
Also \cite[Thm 7.6]{VS97} states that for each $s\in W$, there exists a unique {\em endomorphism valued} meromorphic function \bes
\lambda\mapsto C^\circ(s:\lambda)\in \text{ End}(\C^{\mathcal W}),
\ees
on $\aqc$ such that
\begin{equation}\label{eq: HCexpansion}
E^\circ(\lambda:\eta)(aw)=\mathlarger{\sum}_{s\in \mathcal W}\Phi_{s\lambda}(a) \left(C^\circ(s:\lambda)\eta\right)_w,
\end{equation}
for $a\in A_q^+, w\in \mathcal W, \eta\in \C^{\mathcal W}$, as a meromorphic identity in $\lambda\in \aqc$, where $(\cdot)_w$ is the $w$-th component in $\C{}^{\cW}$.
Also, 
\bes
C^\circ(s:\lambda)^\ast C^\circ(s:-\overline{\lambda})=Id,
\ees
for all $s\in W$ and $\lambda\in \aqc$. Moreover, we have  $C^\circ(1,\l) = Id$ and the following relation holds (see \cite[p. 121]{AO05}):
\[\eE(s\l, \eta) =\eE\left(\l, C^0(s\l, \eta)\right), \quad s\in W.\]

\vspace{.5cm}

 Let $\l \in \aqc$ with $\Re \l$ strictly dominant (that is, $\langle\Re \l, \alpha\rangle >0$ for all $\alpha \in \Sigma^+(\aq, \fg)$) and let $\omega \in \cW$. Then, we have the following asymptotics (see \cite[Part II, Prop. 7.7]{HS94}):
    \begin{equation}\label{eq:asym}
      a^{\rho-\lambda} \eE(\l, \eta)(aw) \rightarrow \eta_\omega, 
    \end{equation}
    as $a \rightarrow \infty$ in $A^+_q$ where $\eta_\omega$ is $\omega$-component of $\eta$.
   For $R>0$ there exists finitiely many roots $\{\alpha_i\}$ and real constants $\{c_i\}$ such that for all $\eta \in \mathbb{C}^\cW$ the singular set of the function $\l \mapsto \eE(\l,\eta)$ in $-a_q^*(R)$ is contained in
   \[\bigcup_{i=1}^k\{\l: \langle \l,\alpha_i\rangle = c_i\}.\]
Hence, the polynomial $p_R(\l) = \prod_{i=1}^k (\langle \l,\alpha_i\rangle - c_i)$ is such that $p_R(\lambda) E^\circ(\lambda,\eta)$ is holomorphic on a neighborhood of $-\aq^\ast(R)$, for all $\eta\in \C^{\mathcal W}$. The polynomial $p_R$ cancels all the the poles of $ E^\circ(\lambda,\eta)$ in $-a_q^*(R)$.
Moreover, the following estimate follows from \cite[Prop. 10.3 and Cor. 16.2]{vdB92} which says that 
\be \label{est: normalized-Eisenstein}
|p_R(\lambda)E^\circ(\lambda:\eta)(a)| \leq C (1 + |\lambda|)^N e^{(s + |\Re\lambda|)|\log a|} |\eta|,
\ee
for some constants $C, N$ and $s$, for all $\lambda\in -\aq^\ast(R), \eta\in\C^{\mathcal W}$ and $a\in A_q$.

For a $K$-invariant function $f$ on $G/H$, the Fourier transform of $f$ is defined as (\cite[Thm. 3.9]{AO05})
\bes
\mathcal F f(\lambda)(\eta)=\int_{X} f(x) E^\circ(-\lambda:\eta)(x)\,dx,
\ees
for $\eta\in \C^{\mathcal W}$. This $\mathcal F f(\lambda)$ is a linear form on $\C^{\mathcal W}$.

\subsection{\bf Split rank one cases:}
We will prove our theorems in the split rank one cases only. In this case the dimension of $\fa_q$ is one. The root system $\Sigma(\fa_q, \fg)$ is $\{\pm \alpha\}$ or $\{\pm \alpha, \pm 2\alpha\}$. Let $H_0\in \aq$ be a unique element such that $\alpha(H_0)=1$.  Let \bes m_1=m_{\alpha},m_1^+=m_{\alpha}^+, m_1^{-}=m_{\alpha}^{-} ;\,\, m_2=m_{2\alpha}, m_2^+=m_{2\alpha}^+, m_2^-=m_{2\alpha}^-.
\ees Then we have $m_1=m_1^+ + m_1^{-}$ and 
$m_2=m_2^+ + m_2^{-}$. Also the followings are true (in split rank one cases) \cite[p. 3]{S90}: 
\begin{enumerate}
    \item $m_1^+ + m_1^->0,$
    \item If $m_2^->0$ then $m_1^+=m_1^-$,
    \item $m_1^-=m_2^-=0$ if and only if $(\mathfrak{g}, \mathfrak{h}$) is Riemannian.
    \item If $m_1^+=m_2^-=0$ then $(\mathfrak{g}, \mathfrak{h}$) is of $\mathfrak{k}_\epsilon$ type.
\end{enumerate} Also $\rho$ is identified with $\rho(H_0)$ and \bes \rho=\frac{1}{2}(m_1 + 2m_2)= \frac{1}{2}(m_1^+ + m_1^{-} + 2m_2^{+} + 2m_2^{-}).\ees 
Also we have, $\cW = \{1\}$  or $\cW = \{\pm 1\}$. 
The Jacobian defined in (\ref{jacobian}) becomes 
\be \label{jacobian-1}
J(t) = (\sinh t)^{m_1^+}(\cosh t)^{m_1^-}(\sinh 2t)^{m_2^+}(\cosh 2t)^{m_2^-}.
\ee

Let $\Delta \in \mathbb{D}(G/H)$ be the Laplace Beltrami operator on $X$. 
The radial part of $\Delta$ is given by \cite[p. 8 and 9]{S90} and \cite[Thm. 2.1]{AM90}.
\begin{align*}
    L(\Delta) &= \frac{\partial^2}{\partial t^2} +(m_1^+ \coth t + m_1^- \tanh t + 2m_2^+\coth 2t +2m_2^- \tanh 2t) \frac{\partial}{\partial t}\\
    &= \frac{1}{J(t)}\frac{\partial}{\partial t}\left(J(t)\frac{\partial}{\partial t} \right).
\end{align*}
Let $x_0 = eH$ be the base point of $G/H$. For a fixed $w \in \cW$ and $\eta \in \C^{\cW}$, the Eisenstein integral $\eE_w(\lambda,\eta)(t) := \eE_w(\lambda,\eta)(a_t w\cdot x_0)$  
satisfies the following differential equation
\begin{equation}\label{eq: diffeq23}
    L(\Delta) \eE_w(\lambda, \eta)(t) = (\rho^2-\lambda^2)\eE_w(\lambda,\eta)(t).
\end{equation}

We now find explicit formula for $\eE_w(\l,\eta)(t)$ (cf. \cite[p. 38 and 39]{S90}).
If $m_2^-=0$, by substituting $s = -\sinh^2 t $, the above differential equation can be transformed into a hypergeometric differential equation and $\eE_w(\l,\eta)$ satisfies 
\[\left[-(1-s)s\frac{\partial^2}{\partial s^2} + \left(s(1+\rho) - \frac{1+m_1^++m_2^+}{2}\right)\frac{\partial}{\partial s}\right] f = \frac{\l^2-\rho^2}{4}f.\]
If $m_2^- >0$, then from \cite[(C.2), p. 32]{S90} we have $m_1^+ = m_1^-$. Thus by substituting $s = 2t$ and $r = -\sinh^2 s$ the differential equation (\ref{eq: diffeq23}) transforms into into a hypergeometric differential equation and $\eE_w(\l,\eta)$ satisfies
\[\left[-(1-r)r\frac{\partial^2}{\partial r^2} + \left(r\left(1+\frac{\rho}{2}\right) - \frac{1+m_1^++m_2^+}{2}\right)\frac{\partial}{\partial r}\right]f = \frac{\l^2-\rho^2}{16}f.\]

Since, $E_w$ is regular at $t=0$, for all $\l$ we have 
\[\eE_w(\lambda,\eta)(t) = \mr{const.} {}_2F_1\left(\frac{\rho+\lambda}{2}, \frac{\rho-\lambda}{2}; \frac{m_1^++m_2^++1}{2}; -\sinh ^2 t\right).\]

Using the following property (\cite[eq. 17, p. 63]{E81})

\begin{equation}\label{eq:hyperasymp}
\begin{split}
{}_2F_1(a, b ; c ; z) & =\frac{\Gamma(c) \Gamma(a-b)}{\Gamma(a) \Gamma(c-b)}(-z)^{-b} F\left(b, 1-c+b ; 1-a+b ; z^{-1}\right)  \\
& +\frac{\Gamma(c) \Gamma(b-a)}{\Gamma(b) \Gamma(c-a)}(-z)^{-a} F\left(a, 1-c+a ; 1-b+a ; z^{-1}\right),
\end{split}
\end{equation}
and 
\begin{equation}\label{eq:asym1}
    \lim_{t \rightarrow \infty} e^{(\rho-\lambda)t}\eE_w(\lambda,\eta)(t) = \eta_w, \quad \Re \lambda >0,
\end{equation}

we obtain for $\Re\lambda>0$,

\begin{equation}\label{eq:eisen}
    \eE_w(\lambda,\eta)(t) = \eta_w 2^{\lambda-\rho}\frac{\Gamma\left(\frac{\rho+\lambda}{2}\right)\Gamma\left(\frac{-\rho+\lambda+m_1^++m_2^++1}{2}\right)}{\Gamma(\lambda)\Gamma\left(\frac{m_1^++m_2^++1}{2}\right)} {}_2F_1\left(\frac{\rho+\lambda}{2}, \frac{\rho-\lambda}{2}; \frac{m_1^++m_2^++1}{2}; -\sinh ^2 t\right).
\end{equation}

Let \bes a^*(R)=\{\lambda\in\mathbb C\mid \Re\lambda\leq R\}.\ees
We note that $\aq^\ast(R)$ becomes $a^\ast(R)$ in this case. We will continue this ($a^\ast(R)$) notation  instead of $\aq^\ast(R)$ from now on.

From the expressions (\ref{eq:eisen}) of the normalized Eisenstein integrals we observe that, for a fixed $R>0$, there exists a polynomial $p_R$ such that $p_R(\l)\eE_w(\l,\eta)(t)$ is holomorphic in $-a^*(R)$. In fact, $p_R$ is given by 
\be\label{exp:p_R}
p_R(\lambda)=\prod_{\substack{k_1\in \bN_0, \\ \rho+2k_1\leq R}}(\lambda +\rho + 2k_1) \prod_{\substack{k_2\in \bN_0,\\ -\rho+ m_1^+ + m_2^+ +1+2k_1\leq R}}(\lambda -\rho +m_1^+ + m_2^+ +1 +2k_2).
\ee

\begin{proposition}\label{Prop:HCSERIES}
\begin{enumerate}
    \item There exists a unique absolutely convergent series for $\lambda \notin \frac{1}{2}\mathbb N_0$ for $t>0$,
    \be \label{HC-series}\Phi_\lambda(t) = e^{(\lambda-\rho)t}\sum_{m=0}^\infty e^{-mt}\Gamma_{m}(\lambda),
    \ee
with $\Gamma_{m}(\lambda) \in \C{}$ and $\Gamma_0=1$ which satisfies the differential equation
\[L(\Delta)\Phi_\lambda(t) = (\lambda^2-\rho^2)\Phi_\lambda(t).\]
    \item  Let $X_R= \{k \in \mathbb N_0: 0< k \leq R\}$. Then there exist a polynomial 
    \be \label{poly:qr}
    q_R(\lambda) = \Pi_{k \in X_R}k(2\lambda-k), 
    \ee
    such that $q_R \Phi_\lambda$ is holomorphic in $a^*(R)$.
    \end{enumerate}
    
\end{proposition}

We recall for a function $f \in C_c^\infty(K\backslash G/H)$, the Fourier transform of $f$ is defined by 
\begin{align*}
    \mathcal F f(\l)(\eta) &= \int_X f(x)\eE(-\l,\eta)(x)dx\\
    &=\sum_{w \in \cW} \int_{0}^\infty f(a_t w \cdot e_1)E^0(-\lambda, \eta)(a_tw \cdot e_1)J(t)dt,
\end{align*}
for all $\eta\in \C^{\cW}$ and $J(t)$ is Jacobian given in equation (\ref{jacobian-1}).
Therefore, $p_R(-\lambda)\cF f(\lambda)$ is holomorphic on $a^*(R)$ and \[\cF(-\lambda)(\eta) = \cF(\lambda)(C^0(-1,\lambda)\eta).\]
The following proposition is about the Fourier transform of compactly supported $K$-invariant functions on $X$ from \cite[Prop. 8.4]{HS94}.
\begin{proposition}\label{prop:paleyweiner}
    Let $\phi \in C_c^\infty(K\backslash G/H)$ and $n \in \mathbb{N}_{0}$. Then there exists a constant $M>0$ such that 
    \[|p_R(-\lambda)\cF\phi(\lambda)(\eta)| \leq M(1+|\lambda|)^{-n}e^{r|\mathrm{Re} \lambda|} \|\eta\|,\]
    for all $\lambda \in a^*(R)$ and $r$ depends on the size of the support of $\phi$. 
\end{proposition}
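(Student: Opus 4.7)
The plan is to adapt the classical Paley--Wiener strategy to the symmetric space setting: combine the polynomial--exponential bound (\ref{est: normalized-Eisenstein}) on the normalized Eisenstein integral with repeated integration by parts, using that $E^\circ(-\lambda,\eta)$ is an eigenfunction of the Laplace--Beltrami operator $\Delta$ on $X$ with eigenvalue $\lambda^2-\rho^2$ (property (1) of the preliminaries). Each application of $\Delta$ will produce a factor of $\lambda^2-\rho^2$ inside the integral, generating arbitrary polynomial decay once we compensate for the possible vanishing of this eigenvalue.

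First I would bring $p_R(-\lambda)$ inside the definition of $\cF\phi(\lambda)(\eta)$ and apply (\ref{est: normalized-Eisenstein}) pointwise. Because $\phi$ is compactly supported in some set $\{a_tw\cdot e_1 : |t|\le r,\ w\in\cW\}$ with $r>0$ depending only on $\mathrm{supp}\,\phi$, this immediately gives the crude estimate
\[
|p_R(-\lambda)\cF\phi(\lambda)(\eta)| \le C_0(1+|\lambda|)^N\, e^{(s+|\Re\lambda|)r}\,\|\eta\|.
\]

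Next, to gain the factor $(1+|\lambda|)^{-n}$, I would exploit the self-adjointness of $\Delta$ on $X$ with respect to the $G$-invariant measure. Since $\phi\in C_c^\infty(X)$, integration by parts produces no boundary terms, and iterating $k$ times yields
\[
(\lambda^2-\rho^2)^k\, p_R(-\lambda)\cF\phi(\lambda)(\eta)=\int_X \Delta^k\phi(x)\, p_R(-\lambda)E^\circ(-\lambda,\eta)(x)\,dx.
\]
Applying the estimate from the previous step to the right-hand side (with $\Delta^k\phi$ in place of $\phi$, noting it is supported in the same compact region) gives
\[
|(\lambda^2-\rho^2)^k\, p_R(-\lambda)\cF\phi(\lambda)(\eta)|\le C_k(1+|\lambda|)^N\, e^{(s+|\Re\lambda|)r}\,\|\eta\|.
\]

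Finally I would divide by $|\lambda^2-\rho^2|^k$. The subtle point, and the only real obstacle, is that this factor vanishes at $\lambda=\pm\rho$, so uniform division is illegal. I would sidestep this by splitting $a^*(R)$ into the compact region $\{|\lambda|\le 2\rho\}\cap a^*(R)$ and its complement. On the compact piece the map $\lambda\mapsto p_R(-\lambda)\cF\phi(\lambda)(\eta)$ is holomorphic (this is precisely the purpose of the factor $p_R$, see (\ref{exp:p_R})) and depends linearly on $\eta$, so it is bounded there by a constant multiple of $\|\eta\|$ and the required inequality is trivial. On the complementary region one has $|\lambda^2-\rho^2|\ge|\lambda|^2/4$; dividing through and absorbing $e^{sr}$ into the constant yields
\[
|p_R(-\lambda)\cF\phi(\lambda)(\eta)|\le C'_k(1+|\lambda|)^{N-2k}\, e^{r|\Re\lambda|}\,\|\eta\|.
\]
Choosing $k$ large enough so that $2k-N\ge n$ and taking $M$ to be the larger of the two constants completes the proof.
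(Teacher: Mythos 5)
Your argument is correct. The paper itself offers no proof of this proposition --- it is quoted verbatim from \cite[Prop.~8.4]{HS94} --- but the proof you give is the standard Paley--Wiener argument that underlies the cited result: the crude bound from (\ref{est: normalized-Eisenstein}) on the compact support, self-adjointness of $\Delta$ to trade powers of $\lambda^2-\rho^2$ for derivatives of $\phi$, and a split of $a^*(R)$ into a compact piece (where holomorphy of $p_R(-\lambda)\cF\phi(\lambda)(\eta)$ and linearity in $\eta$ make the estimate trivial) and its complement (where $|\lambda^2-\rho^2|\gtrsim|\lambda|^2$). The only cosmetic point is that when $\rho=0$ (the case $p=q=1$) your threshold $|\lambda|\le 2\rho$ degenerates; replace it by $|\lambda|\le\max(2\rho,1)$ and the division step goes through unchanged.
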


From \cite[Thm 7.1, Thm 7.4]{HS94} we have the following theorem: 
\begin{theorem} For $f\in L^2(X)$
  \begin{equation}\label{Plncherel}
    \left(\int_{\ia_q} |\cF(\l)(\eta)|^2d\l \right)^{1/2}\leq \|f\|_{L^2(X)} \|\eta\| ,
\end{equation}
where $d\l$ is the Lebesgue measure on $\ia_q$.   
\end{theorem}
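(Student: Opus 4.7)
The plan is to reduce the asserted inequality to the abstract $K$-invariant Plancherel formula on $X = G/H$, and then handle the variable $\eta$ by a pointwise Cauchy--Schwarz inequality in $\lambda$. By \cite[Thm 7.1, Thm 7.4]{HS94}, the Plancherel decomposition of $L^2(X)^K$ can be written, for $f\in C_c^\infty(K\backslash G/H)$, as
\[
\|f\|_{L^2(X)}^2 \;=\; \int_{\ia_q} \|\cF f(\lambda)\|_{(\C^{\cW})^\ast}^{\,2}\, d\lambda \;+\; \|f_{\mathrm{disc}}\|_{L^2(X)}^2,
\]
where the first summand is the most-continuous part, whose Plancherel density is exactly the Lebesgue measure $d\lambda$ thanks to the normalization built into $\eE(\lambda,\eta)$, and the second summand is the projection onto the (possibly trivial) discrete part of $L^2(X)^K$. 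Dropping this non-negative remainder gives
\[
\int_{\ia_q} \|\cF f(\lambda)\|_{(\C^{\cW})^\ast}^{\,2}\, d\lambda \;\leq\; \|f\|_{L^2(X)}^2.
\]

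Next, for each fixed $\lambda\in\ia_q$ the Fourier transform $\cF f(\lambda)$ is a linear functional on the finite-dimensional Hilbert space $\C^{\cW}$, so Cauchy--Schwarz yields the pointwise bound
\[
|\cF f(\lambda)(\eta)| \;\leq\; \|\cF f(\lambda)\|_{(\C^{\cW})^\ast}\, \|\eta\|.
\]
Squaring, integrating against $d\lambda$, and combining with the previous display gives $\int_{\ia_q} |\cF f(\lambda)(\eta)|^2\, d\lambda \leq \|\eta\|^2\, \|f\|_{L^2(X)}^2$, which is the claim for $f\in C_c^\infty(K\backslash G/H)$. The passage to general $f\in L^2(X)^K$ is standard by density, and for arbitrary $f\in L^2(X)$ one applies the estimate to the $K$-average $P_K f$, noting that $\cF f = \cF(P_K f)$ because $\eE(-\lambda,\eta)$ is left $K$-invariant, and that $P_K$ is an orthogonal projection on $L^2(X)$, hence non-expansive.

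The main point requiring care --- more a bookkeeping verification than a genuine obstacle --- is to match the normalization conventions: one must check that the $\eE$ of this paper yields exactly the Lebesgue measure as the Plancherel density on $\ia_q$, rather than a density of the form $|c(\lambda)|^{-2}\, d\lambda$, so that the Plancherel identity of \cite[Thm 7.1, Thm 7.4]{HS94} takes the form displayed above. Granting this identification, the remainder of the argument is essentially a one-line invocation of Plancherel followed by Cauchy--Schwarz.
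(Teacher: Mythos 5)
Your proposal is correct and follows essentially the same route as the paper, which simply observes that $\dim\fa_q=1$ and cites \cite[Thm 7.1, Thm 7.4]{HS94} for the Bessel-type inequality on the (most) continuous part; your added Cauchy--Schwarz step in $\eta$ and the $K$-averaging/density remarks are just the routine details the paper leaves implicit.
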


\begin{comment}
Moreover, 
\begin{equation}\label{eq: HCexpansion}
    \eE_w(\lambda, \eta)(t) = \Phi_\lambda(t)\eta_w + \Phi_{-\lambda}(t)[C^0(-1, \lambda)\eta]_w
\end{equation}
for a unique $C^0(-1,\lambda) \in \mr{End}(\C^\cW)$ and
\[\Phi_{\lambda}(t) = e^{(\lambda-\rho)t}\sum_{m=0}^\infty \Gamma_m(\lambda)e^{-mt},\]
where $\Gamma_0(\lambda) = 1$ and $\Gamma_m(\lambda)\in \C$.
\end{comment}

\section{Harish-Chandra series and $C^0$-functions}
Let $G/H$ be a split rank $1$ semisimple symmetric spaces. In this section we will study Harish-Chandra series and will find explicit expression of $C^0$-functions.

We have the following results regarding Harish-Chandra series (see \cite[Prop 7.5]{HS94}, \cite[p. 66]{VS97}): 
 Let \be \label{modified-HC} 
 \Psi_\lambda(t) = \sqrt{J(t)}\Phi_\lambda(t) = e^{\lambda t}\sum_{m=0}^{\infty} \ti{\Gamma}_m(\lambda)e^{-mt}.
 \ee
The $\Psi_\lambda$ satisfies
\[\Psi_\lambda''(t) + d(t)\Psi_\lambda(t) = (\lambda^2-\rho^2)\Psi_\lambda(t),\]
where  \bes d(t) = J''/\sqrt{J} = \sum_{m=0}^\infty d_m e^{-mt}, \ees and $d_0 = \rho^2$.

By substituting (\ref{modified-HC}) into above we obtain the following recursive formula:
\bes
(\lambda^2 -\rho^2)\ti{\Gamma}_m=(\lambda-m)^2\ti{\Gamma}_m  - \sum_{i=0}^m \ti{\Gamma}_i d_{m-i}. \ees 
That is, 
    \bes
m(2\l-m)\ti{\Gamma}_m=\rho^2\ti{\Gamma}_m  - \sum_{i=0}^m \ti{\Gamma}_i d_{m-i},
\ees
which implies that
\[  m(2\l-m)\ti{\Gamma}_m= - \sum_{i=0}^{m-1} \ti{\Gamma}_i d_{m-i}.\]

Therefore, the poles of $\ti{\Gamma}_m$ lies in the set $\{1/2,1,..., m/2\}$. We also have the following relation: 
\[\Gamma_m(\lambda) = \sum_{i=0}^{m}b_i\ti{\Gamma}_{m-i}(\lambda),\]
where \[(J(t))^{-1/2} = e^{-\rho t}\sum_{m=0}^\infty b_m e^{-mt}.\]
We thus obtain that the poles of $\Gamma_m \subseteq \{1/2,1,..., m/2\}$.
Further we have the following (\cite[Theorem 7.4]{VS97})

\begin{theorem}\label{thm:Gamma}
For $R > 0$, there exists a polynomial $q_R$ and constants $M, \chi >0$ depending on $R$, such that
\[|q_R(\lambda)\Gamma_m(\lambda)| \leq M(1+m)^{\chi}(1+|\lambda|)^{\mathrm{\text{deg}} \,q_R},\]
for all $m \geq 0$ and $\lambda \in \overline{a}^*(R)$.
\end{theorem}
The polynomial $q_R$ is same as defined in (\ref{poly:qr}) is of the form \bes q_R(\l)=d (\l-\frac 12)(\l-1)\cdots (\l-\left \lfloor R/2 \right \rfloor),\ees where $d$ is a fixed constant. Therefore if $\l_0\in \{\frac{1}{2},1,..., \frac{m}{2}\}$, then \be\label{est:gamma-1}
|(\l-\l_0)\Gamma_m(\l)|\leq M(1+ m)^\chi,
\ee
for all $\l$ in a small neighborhood of $\l_0$. 

Next we give explicit expression of  $C^{0}(-1, \lambda)$. Our method is based on the idea of \cite{S90}.
\begin{lemma}\label{lem:c-function}
\begin{enumerate}
    \item If $\cW = \{1\}$, then $C^{0}(-1,\lambda)=c(\lambda)$ is a complex number given by
    \begin{equation}\label{eq:c-function}
    c(\lambda)=2^{2\lambda} \frac{\Gamma\left(\frac{\rho+\lambda}{2}\right)\Gamma(-\lambda)\Gamma\left(\frac{-\rho+\lambda+m_1^++m_2^++1}{2}\right)}{\Gamma\left(\frac{\rho-\lambda}{2}\right)\Gamma(\lambda)\Gamma\left(\frac{-\rho-\lambda+m_1^++m_2^++1}{2}\right)}.
\end{equation}

\item If $\cW = \{\pm 1\}$, then $C^{0}(-1,\lambda)$ is $2\times 2$ matrix given by
$C^{0}(-1,\lambda)=c(\lambda)I$ where
\[c(\l) = 2^{2\lambda}\frac{\Gamma((\rho+\lambda)/2)\Gamma(-\lambda)\Gamma((\lambda-\rho+1+m_1^+ + m_2^+)/2)}{\Gamma((\rho-\lambda)/2)\Gamma(\lambda)\Gamma((-\rho - \lambda + 1+m_1^+ + m_2^+)/2)},\] and $I$ is the $2\times 2$ identity matrix.

\end{enumerate}
    
\end{lemma}

\begin{proof}
Let $\cW = \{1\}$. Then in this case, $ C^{0}(-1,\lambda)=c(\lambda)$ is a complex number. Without loss of generality we assume that $\eta =1$. For {$\Re \lambda < 0$, using (\ref{eq:hyperasymp}), (\ref{eq:eisen}), (\ref{eq: HCexpansion}) and 
\begin{equation*}
    \lim_{t \rightarrow \infty}e^{(\rho+\lambda)t} \eE_1(\lambda,1)(t) = \lim_{t \rightarrow \infty}e^{(\rho+\lambda)t}[\Phi_\lambda(t) + \Phi_{-\lambda}(t)c(\lambda)]=  c(\l)
\end{equation*}}
we obtain 
\begin{equation*}
    c(\lambda)=2^{2\lambda} \frac{\Gamma\left(\frac{\rho+\lambda}{2}\right)\Gamma(-\lambda)\Gamma\left(\frac{-\rho+\lambda+m_1^++m_2^++1}{2}\right)}{\Gamma\left(\frac{\rho-\lambda}{2}\right)\Gamma(\lambda)\Gamma\left(\frac{-\rho-\lambda+m_1^++m_2^++1}{2}\right)}.
\end{equation*}

If $\cW = \{\pm 1\}$.  With respect to the basis $e_1$ and $e_2$, let $C^0(-1,\l) = [c_{ij}(\l)]$. Then for $t>0$,
\begin{align*}
    \eE_1(\l,e_1)(t) &= \Phi_\l(t) (1) + \Phi_{-\l}(t)c_{11}(\l) \\
    \eE_{-1}(\l,e_1)(t) &= \Phi_\l(t) (0)+ \Phi_{-\l}(t)c_{21}(\l) \\
    \eE_1(\l,e_2)(t) &= \Phi_\l(t)(0) + \Phi_{-\l}(t)c_{12}(\l) \\
    \eE_{-1}(\l,e_2)(t) &= \Phi_\l(t)(1) + \Phi_{-\l}(t)c_{22}(\l). 
\end{align*}

Thus, for $\Re \l <0$ we obtain from the expansion (\ref{eq: HCexpansion}) that
\[ \lim_{t \rightarrow \infty}e^{(\rho + \l)t}\eE_w(\l, \eta)(t) = [C^0(-1,\l)\eta]_w.\]
Identify that if $w=1$, then $i=1$ and if $w=-1$, then $i=2$. Then the above says that,
\begin{align*}
  c_{ij} &=   \lim_{t \rightarrow \infty}e^{(\rho + \l)t}\eE_w(\l, e_j)(t) \\
  &= \delta_{ij} \frac{2^{\l-\rho}\Gamma((\l+\rho)/2)\Gamma((\l-\rho+1)/2)}{\Gamma(\l)\Gamma(1/2)}{}_2F_1\left(\frac{\rho+\l}{2}, \frac{\rho-\l}{2}; \frac{1}{2}; -\sinh^2 t\right).
\end{align*}
Immediately, we obtain that $c_{ij} = 0$ if $i \neq j$.
Now, substituting  (\ref{eq:hyperasymp}) in the above we obtain the entries $c_{11}$ and $c_{22}$ as follows:

\begin{equation}\label{eq:C-matrix}
        \begin{split}
            c_{11}(\l) &= c_{22}(\l) = 2^{2\lambda}\frac{\Gamma((\rho+\lambda)/2)\Gamma(-\lambda)\Gamma((\lambda-\rho+1+m_1^+ + m_2^+)/2)}{\Gamma((\rho-\lambda)/2)\Gamma(\lambda)\Gamma((-\rho - \lambda + 1+m_1^+ + m_2^+)/2)},\\
            c_{12}(\l) &= c_{21}(\l) = 0.
        \end{split}
    \end{equation}

Thus, $C^0(-1,\l) = c(\l)Id$ where 
\[c(\l) = 2^{2\lambda}\frac{\Gamma((\rho+\lambda)/2)\Gamma(-\lambda)\Gamma((\lambda-\rho+1+m_1^+ + m_2^+)/2)}{\Gamma((\rho-\lambda)/2)\Gamma(\lambda)\Gamma((-\rho - \lambda + 1+m_1^+ + m_2^+)/2)}.\]
\end{proof}
We observe that $C^0(-1,\l)$ is a diagonal matrix with same diagonal entries $c(\lambda)$. Therefore,  we can write (\ref{eq: HCexpansion}) as
 \begin{equation}\label{eq:HCcombined}
     \eE_w(\l,\eta)(t) = \Phi_\l(t)\eta_w + c(\l)\eta_w\Phi_{-\l}(t).
 \end{equation}

 Since $E^0_w(-\lambda,\eta)(t) =  E^0_w(\lambda,C^0(-1,\l)\eta)$,   we obtain
 \begin{equation}\label{eq: Re<0}
     E^0_w(\lambda,\eta)(t) =\eta_w 2^{\lambda-\rho}\frac{\Gamma(\frac{\lambda+\rho}{2})\Gamma(\frac{\lambda-\rho+m_1^+ + m_2^+ +1}{2})}{\Gamma(\lambda)\Gamma(\frac{m_1^+ + m_2^+ +1}{2})}{}_2F_1\Big(\frac{\lambda+\rho}{2}, \frac{-\lambda +\rho}{2};\frac{q}{2}; -\sinh^2{t}\Big), 
 \end{equation}
for $\Re \lambda < 0$.
Thus, for $\Re \lambda < 0$ the singularities are contained in
\[ \{-\rho-2k_1: k_1 \geq 0 \} \cup \{\rho- m_1^+ - m_2^+ -1 -2k_2 : k_2 \geq 0\; \text{and}\; \rho- m_1^+ - m_2^+ -1 -2k_2 <0 \}. \]

From the previous results we have the following:

\begin{lemma}\label{lemm: pole-zero-description}The following are true :
\begin{enumerate}
    \item The singularities of $\Gamma_m$ are at most simple poles and contained in $\{\frac 12,1,...,\frac m2\}$.
    \item The singularities of $\Phi_\lambda$ (as a function of $\lambda$) are at most simple poles and contained in  $\frac{1}{2}\mathbb{N}_{0}$.
    \item The poles of $c(\l)$ are simple and equal to $\{-\rho-2k_1:k_1 \in\mathbb N_0\} \cup \{\rho - 1-m_1^+-m_2^+ -2k_2: k_2 \in\mathbb N_0\}\cup\{k_3:k_3\in\mathbb N\}.$
    \item The poles of $E^\circ_w(\cdot, \eta)(t)$ are simple and equal to $\{-\rho-2k_1:k_1 \in\mathbb N_0\} \cup \{\rho - 1-m_1^+-m_2^+ -2k_2: k_2\in\mathbb N_0\}$.
    \item The zeros of $c(\l)$ is equal to $\{\rho +2k_1:k_1 \in\mathbb N_0\} \cup \{-k_2: k_2\in\mathbb N\}\cup \{-\rho + 1 +m_1^+ + m_2^+ +2k_3: k_3 \in\mathbb N_0\}$.
    \item The zeros of $E^\circ_w(\cdot, \eta)(t)$ is equal to $\{-k : k \in \bN_0\}$.
\end{enumerate}
\end{lemma}

\begin{Remark}
    Suppose there exists a $k_2\in\mathbb N_0$ such that $\rho -1-m_1^+-m_2^+-2k_2 = 0$. Then from $(4)$ (in Lemma \ref{lemm: pole-zero-description}) above, it follows that $0$ is a pole $E^\circ_w(\cdot, \eta)(t)$ but we note from $(6)$ (in Lemma \ref{lemm: pole-zero-description}) above that $0$ also a zero of $E^\circ_w(\cdot, \eta)(t)$. Hence, the function $E^\circ_w(\cdot, \eta)(t)$ is holomorphic at $0$ and hence holomorphic on the line $i\mathbb R$. 
\end{Remark}

The Fourier Inversion formula for the general rank cases is discussed in \cite[Thm. 7.1]{vS99} and the explicit formula given below for the split rank one symmetric spaces is given in \cite[Eq. 5-13, p.128]{AO05}. 
    
\begin{theorem}[Fourier Inversion]\label{thm:Finversion}

    For $f \in C_c^\infty(K\backslash G/H)$ the Fourier inversion formula is given by
    \begin{equation}
        f(a_tw\cdot x_0) = \int_{i\R{}} \eE_w(\lambda, \mathcal F f(\lambda))(t) d\lambda + 4\pi i\sum_{\mu_k \in L}  \mr{Res}_{\l = -\mu_k}[\mathcal F f (\l) \Phi_{\l} (t)]_w ,
    \end{equation}
 where $L = \{\mu_k = \rho -1-m_1^+-m_2^+ -2k: k\in \bN_0\; \text{and}\; {\mu_k > 0}\}$.
\end{theorem}

%In this subsection we define the Fourier transform on (pseudo) real hyperbolic spaces, for both the cases (that is, for $p\geq 1, q\geq 1$) in unified way. %From \cite[Example 2.3]{HS94} and equation~\ref{eq:measure}, we can write the invariant measure $dx$ on $X=G/H$ as follows:
%\[\int_X f(x)dx = \sum_{w \in \cW} \int_K \int_0^\infty f(ka_tw\cdot e_{1}) J(t)dt dk, \quad f \in L^1(X)\]
%where $dt$ is the Lebesgue measure and $J(t)$ is the Jacobian $\cosh^{p-1}t \sinh^{q-1}t$.

\section{Helgason-Johnson's theorem}\label{sec:hj} 
In this section we prove the analogue of Helgason-Johnson theorem for split rank one semisimple symmetric spaces. Let \[S_1=\{\l \in \C: |\Re \l| \leq \rho\}.\] 
  
Let $\alpha, \beta, \l \in \C$ with $\alpha \neq -1,-2,\ldots$ and $\rho = \alpha+\beta +1$. From \cite[Eq. 2.3]{K75} the Jacobi functions with paramaters, $\alpha, \beta$ and $\l$ is defined as
\[\varphi_{-i\l}^{(\alpha,\beta)}(t) = {}_2F_1\left(\frac{\rho+\l}{2},\frac{\rho-\l}{2};\alpha +1; -\sinh^2t\right).\]

Then the normalized Eisenstein integral can also be written in the form of Jacobi functions for $\Re \l >0$ with parameters $\alpha = (m_1^+ + m_2^+ +1)/2 -1$, $\beta = (m_1^- + m_2^+ + 2m_2^- +1)/2-1$ and $\rho = \alpha+\beta+1$  as 
\[\eE_w(\l,\eta)(t) = \eta_w2^{\lambda-\rho}\frac{\Gamma(\frac{\lambda+\rho}{2})\Gamma(\frac{\lambda-\rho+m_1^+ + m_2^+ +1}{2})}{\Gamma(\lambda)\Gamma(\frac{m_1^+ + m_2^+ +1}{2})} \varphi_{-i\l}^{(\alpha,\beta)}(t).\]

Thus, from \cite[Lemma 2.3]{K75} and properties of Gamma function, we obtain the following estimate for each fixed $R>0$:
\begin{equation}\label{est:Jacobi}
    |p_R(\l)\eE_w(\l,\eta)(t)| \lesssim \|\eta\|(1+|\l|)^d(1+t)e^{(|\Re \l| -\rho)t},
\end{equation}

for $d =\text{deg}\, p_R$ and $\lambda\in -a^\ast(R)$.
Therefore, for each fixed $\eta\in \mathbb C^{\mathcal W}$, \bes p_R(\l)\eE_w(\l, \eta)(\cdot),
\ees
is bounded if $\l \in -a^*(R)$ and $|\Re \l| < \rho$. It is also known that (see \cite[Lemma 2.1]{FT73}) $\varphi_{-i\l}^{(\alpha,\beta)}(t)$ is bounded in $t$ if and only if $\lambda \in S_1$ for $\alpha\geq \beta\geq -\frac{1}{2}$. This gives characterisation of all $\lambda$ for which $t\mapsto p_R(\l)\eE_w(\l, \eta)(t)$ is bounded on $(0, \infty)$, only for the case when $m_1^+\geq m_1^- + 2m_2^-$. In this section we prove a similar result for general $m_1, m_2$.  

Moreover, we observe that from the asymptotic behavior (\ref{eq:asym1}), if $\Re \l >0$ and $\l$ not a pole of $\eE_w$  we have
\[\lim_{t\rightarrow\infty}|e^{(\rho-\l)t}\eE_w(\l,\eta)(t) -\eta_w| =0.\]
This implies that
\[|\eE_w(\l,\eta)(t)| \asymp |\eta_w|e^{(\Re \l -\rho)t}. \]
Thus, for the case when  $m_1^- + 2m_2^-\leq m_1^+ +2$, where there are no poles of $\eE_w(\cdot,\eta)(t)$ for $\Re \l >0$,  the characterization of boundedness (for $\Re \l >0$) also follows naturally from this asymptotic behavior. In particular,  $\eE_w$ is bounded if $0 <\Re \l \leq \rho$ and unbounded if $\Re \l > \rho$. However, we have the the main difficulties in the other cases. 
%Hence, we follow the methods of \cite{NPP14}.  

\begin{theorem} \label{thm: HJO}
  Fix $R>0$, $w \in \cW$ and $\eta \in \C^{\cW}$.  \begin{enumerate}
      \item Let $\l_0 \in (S_1 \cap-a^*(R)) \cup -\mathbb N_0$. Then the function $t\mapsto p_R(\l_0)\eE_w(\l_0,\eta)(t)$ is bounded on $(0,\infty)$.   
      \item Conversely, for $\l_0 \in -a^*(R)$ and for non-zero $\eta_w$, if the function $t\mapsto p_R(\l_0)\eE_w(\l_0,\eta)(t)$ is bounded on $(0,\infty)$, then $\l_0 \in S_1  \cup -\mathbb N_0$. 
  \end{enumerate} 
  
\end{theorem}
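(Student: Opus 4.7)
The plan is to work with the explicit hypergeometric representation $\eE_w(\l,\eta)(t)=\eta_w c_{p,q}(\l)F_\l(t)$ with $c_{p,q}(\l)=2^{\l-\rho}\Gamma((\l+\rho)/2)\Gamma((\l-\rho+q)/2)/(\Gamma(\l)\Gamma(q/2))$ and $F_\l(t)={}_2F_1(\frac{\l+\rho}{2},\frac{\rho-\l}{2};\frac{q}{2};-\sinh^2 t)$, and to apply the connection formula (2.14) to extract the leading asymptotics as $t\to\infty$. After substitution, the Gamma factors of $c_{p,q}$ cancel cleanly against those in the connection coefficients, producing the two-term expansion
\[
p_R(\l)\eE_w(\l,\eta)(t)=\eta_w p_R(\l)\left[2^{\l-\rho}e^{(\l-\rho)t}(1+o(1))+2^{-\l-\rho}C^\circ(-1,\l)\,e^{(-\l-\rho)t}(1+o(1))\right]
\]
as $t\to\infty$. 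Multiplication by $p_R$ makes both coefficients holomorphic on $-a^\ast(R)$ by the explicit construction in (2.20), hence finite at every $\l_0$ of interest.

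For Part (1) with $\l_0\in S_1\cap -a^\ast(R)$, both exponentials satisfy $|e^{(\pm\l_0-\rho)t}|\leq 1$ on $(0,\infty)$, so the asymptotic yields boundedness as $t\to\infty$; continuity at $t=0$ follows from ${}_2F_1(\cdot,\cdot;q/2;0)=1$. For $\l_0\in -\mathbb N_0$ the pole of $\Gamma(\l)$ in the denominator of $c_{p,q}$ generically forces $\eE_w(\l_0,\eta)\equiv 0$; in the non-generic sub-case where a numerator Gamma factor supplies a simultaneous cancelling pole, the point $\l_0$ is precisely one of the linear factors of $p_R$ listed in (2.20), so $p_R(\l_0)=0$ and the product $p_R(\l_0)\eE_w(\l_0,\eta)(t)$ vanishes identically. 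In either event boundedness is trivial.

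For Part (2) I argue by contrapositive. Assume $\l_0\in -a^\ast(R)\setminus(S_1\cup -\mathbb N_0)$ with $\eta_w\neq 0$, so $|\Re\l_0|>\rho$. If $\Re\l_0>\rho$, the first asymptotic term grows and its coefficient $\eta_w p_R(\l_0)\,2^{\l_0-\rho}$ is non-zero, since all zeros of $p_R$ described in (2.20) satisfy $\Re\l\leq\rho$; hence the function is unbounded. If $\Re\l_0<-\rho$, the second term grows, with coefficient proportional to $\eta_w p_R(\l_0)C^\circ(-1,\l_0)$. The explicit formula shows $C^\circ(-1,\l)$ can vanish only via the denominator $\Gamma(\l)$, i.e.\ at $\l\in -\mathbb N_0$, which is excluded. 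At the points $\l_0=-\rho-2k$ or $\l_0=\rho-q-2k$ in $-a^\ast(R)$ where $p_R(\l_0)=0$ and $C^\circ(-1,\l)$ simultaneously has a simple pole, the matching orders force $\lim_{\l\to\l_0}p_R(\l)C^\circ(-1,\l)$ to be finite and non-zero. Hence the coefficient of the growing exponential is non-zero and the function is unbounded.

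The main obstacle is the careful book-keeping at the exceptional points where several Gamma factors in $c_{p,q}$, in $C^\circ(-1,\cdot)$ and in $p_R$ produce simultaneous zeros and poles; these $0\times\infty$ coincidences are precisely what make $-\mathbb N_0$ the exceptional set for boundedness, and one must verify for sufficiency that every coincidence at $\l_0\in -\mathbb N_0$ is absorbed by a zero of $p_R$, and for necessity that every coincidence outside $-\mathbb N_0$ leaves a non-zero limit. The case $q=1$ requires no new analysis: by Proposition 2.7, $C^\circ(-1,\l)$ is diagonal with scalar entries given by the same formula as in $q>1$, so the scalar argument applies to each component $w\in\{\pm 1\}$ separately.
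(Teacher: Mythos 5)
Your overall strategy — split $\eE_w$ into an $e^{(\l-\rho)t}$ part and an $e^{(-\l-\rho)t}$ part with explicit connection coefficients, then compare $|\Re\l_0|$ with $\rho$ and check non-vanishing of the dominant coefficient — is the same as the paper's, which works with the decomposition $\eE_w(\l,\eta)=\Phi_\l\eta_w+c(\l)\eta_w\Phi_{-\l}$ rather than with the ${}_2F_1$ connection formula. For generic $\l_0$ (no resonance, no pole of $c$) your argument is fine and reaches the right conclusions.

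The gap is at exactly the points the theorem exists to handle, and your displayed expansion is not merely unjustified there but false. First, the coefficient of the second term is $p_R(\l)C^\circ(-1,\l)$, and by Lemma \ref{pole-zero-description} the function $c(\l)$ has simple poles at every $k\in\mathbb N_0$ (coming from $\Gamma(-\l)$); these are \emph{not} zeros of $p_R$ (whose zeros are only $-\rho-2j$ and $\rho-q-2j$), so "multiplication by $p_R$ makes both coefficients holomorphic on $-a^\ast(R)$" is incorrect: at $\l_0\in\mathbb N\cap -a^\ast(R)$ your second coefficient is infinite. What actually happens is that the singular parts of the two terms cancel against each other (equivalently, the connection formula (\ref{eq:hyperasymp}) degenerates when $a-b=\l\in\mathbb Z$ and produces logarithmic terms $\log(-z)\sim 2t$), so the correct expansion at such $\l_0$ contains terms of the form $t\,e^{(-\l_0-\rho)t}$ and, at poles of $\Gamma_m$, terms $t\,e^{(\l_0-\rho)t}e^{-mt}$. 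Establishing that these extra $t$-factors never spoil boundedness for $|\Re\l_0|\le\rho$ (note $(1+t)e^{(|\Re\l_0|-\rho)t}$ is unbounded when $|\Re\l_0|=\rho$, so one must check the $t$-terms always carry a strictly smaller exponential), and never kill the leading coefficient for $|\Re\l_0|>\rho$, is the entire content of the paper's proof: it multiplies by $(\l-\l_0)$, differentiates at $\l_0$, and controls the resulting quantities $a_m(\l_0),a_m'(\l_0),b(\l_0),b'(\l_0),\Gamma_m'(-\l_0)$ uniformly in $m$ via Theorem \ref{thm:Gamma} so that the series can be resummed. You name this bookkeeping as "the main obstacle" but then only assert its outcome ("the matching orders force the limit to be finite and non-zero", "every coincidence is absorbed by a zero of $p_R$") without carrying it out; since the generic case is easy and this is where all the difficulty lies, the proposal as written does not prove the theorem.
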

\begin{remark}
 From this theorem above, we conclude that for a fixed $R > \rho$, a non-zero $\eta_w$ and $\lambda_0\in -a^*(R)$, the function $p_R(\l_0)\eE_w(\l_0,\eta)(\cdot)$ is bounded on $(0,\infty)$ if and only if $\l_0 \in S_1 \cup -\bN_0$.
\end{remark}

\begin{comment}
We recall that the normalized Eisenstein Integral $E^0_w$ for $ \Re\lambda >0$ is given by 
\bes E^0_w(\lambda,\eta)(t) := \eta_w 2^{\lambda-\rho}\frac{\Gamma(\frac{\lambda+\rho}{2})\Gamma(\frac{\lambda-\rho+q}{2})}{\Gamma(\lambda)\Gamma(\frac{q}{2})}{}_2F_1\Big(\frac{\lambda+\rho}{2}, \frac{-\lambda +\rho}{2};\frac{q}{2}; -\sinh^2{t}\Big).
\ees
The normalized Eisenstein integral has no singularities on $i\R{}$. 
\end{comment}
%Observe that if $m_1^- + 2m_2^-\leq m_1^+ +2$ then there are no singularities of $E^0_w$ for $\Re \lambda >0$. However, if $m_1^- + 2m_2^-> m_1^+ +2$,  the singularities of $\eE_w(\cdot, \eta)(t)$ are contained in  
%\[\{\rho - m_1^+-m_2^+-1-2k: k \geq 0\; \text{and}\; \rho - m_1^+-m_2^+-1-2k>0 \},\]
%in $\Re\lambda>0$.

\vspace{.4cm}
\begin{proof}[Proof of Theorem~\ref{thm: HJO}]

Throughout the proof we will assume that $\eta_w \neq 0$. 

{\bf Case 1:  $\Re \l_0 \geq 0$:}
For a fixed $R>0$, let $p_R$ be the polynomial given in (\ref{exp:p_R}).  Let  \bes F_w(\l,\eta)(t) = p_R(\l)\eE_w(\l,\eta)(t).\ees This is holomorphic at $\lambda_0$.
Then using (\ref{HC-series}) and  (\ref{eq:HCcombined}) we have,
\begin{align*}
    F_w(\l_0,\eta)(t) &= \der \big|_{\l=\l_0} (\l-\l_0)F_w(\l,\eta)(t)\\
    &=\der \big|_{\l=\l_0} \sum_{m=0}^\infty\eta_w \Big[e^{(\l-\rho)t}(\l-\l_0)p_R(\l)\Gamma_m(\l)\\
    &+ e^{(-\l -\rho)t}(\l-\l_0)p_R(\l)c(\l)\Gamma_m(-\l)\Big]e^{-mt}.
\end{align*}
If $\l_0 \in 1/2\mathbb{N}_0$, then the series expansion of $\Phi_{\l_0}$ and (hence) the series expansion of $F_w(\l_0,\eta)(t)$ is not valid. Thus, to get the series expansion we multiply by $\l-\l_0$ and take derivative at $\l_0$. To make it uniform, we use the above expression for all $\l_0$. Let \bes
a_m(\l) = (\l-\l_0)\Gamma_m(\l) \text{ and } b(\l) = (\l-\l_0) p_R(\l)c(\l).\ees
It follows from Lemma \ref{lemm: pole-zero-description} that $a_m$ and $b$ are holomorphic at $\lambda_0$. 
Then $F_w(\l_0, \eta)(t)$ can be written as
\bes
F_w(\l_0, \eta)(t)=p_0(t) + \sum_{m=1}^\infty p_m(t),
\ees
where \[p_0(t) = \eta_w[e^{(\l_0-\rho)t}p_R(\l_0) + e^{(-\l_0-\rho)t}b(\l_0)(-t)+ e^{(-\l_0-\rho)t}b'(\l_0)]\] and 

\begin{align*}
    p_m(t) &= \eta_w \Bigg[e^{(\l_0-\rho)t}a_m(\l_0)p_R'(\l_0) + e^{(\l_0-\rho)t}a_m'(\l_0)p_R(\l_0)\\
    &+ e^{(\l_0-\rho)t}a_m(\l_0)p_R(\l_0)t
     -e^{(-\l_0-\rho)t}b(\l_0)\Gamma_m(-\l_0)t \\
     &+ e^{(-\l_0-\rho)t}b'(\l_0)\Gamma_m(-\l_0) - e^{(-\l_0-\rho)t}b(\l_0)\Gamma_m'(-\l_0) \Bigg]e^{-mt}.
\end{align*}

First we will find the estimates of $a_m$, $a_m'$, $b$, $b'$, $\Gamma_m$ and $\Gamma_m'$. 
From Theorem \ref{thm:Gamma} we can choose an $R' > \rho$ and obtain the following estimate:
\[|\Gamma_m(-\l_0)| \leq M \frac{(1+|\l_0|)^{\text{deg} q_{R'}}}{|q_{R'}(-\l_0)|}(1+m)^{\chi}. \]
We note that $\Gamma_m$ has no pole on $\Re\lambda<0$.
Since $q_{R'}(\l)$ has no zeroes in $\Re \l < 0$ and $(1+|\l|)^{\text{deg} q_{R'}}/|q_R(\l)|$ is bounded uniformly in the region $\Re \l <0$, we have
\[|\Gamma_m(-\l_0)| \leq M (1+m)^{\chi}.\]
Therefore, for a fixed $t_0 >0$, we obtain 
\[|\Gamma_m(-\l_0)| \leq M_{t_0} e^{m t_0}.\]
Now, we shall calculate the estimate of $a_m$. 
%The function $a_m$ will be nonzero if $\l_0 \in 1/2\bN$. 
Since $a_m$ is holomorphic at $\l_0$, integrating over a circle $\gamma_\epsilon$ centered $\l_0$ with radius  $\epsilon<1/2$ and using the estimate (\ref{est:gamma-1}) we get
\begin{align*}
    |a_m(\l_0)| &\leq M\oint_{\gamma_\epsilon} \frac{|\l-\l_0\|\Gamma_m(\l)|}{|\l-\l_0|}|d\gamma|\\
    & \leq \frac{M}{2} (1+m)^{\chi}.
\end{align*}

Now, we have to estimate $a_m'(\l_0)$. Since $a_m(\l)$ is holomorphic at $\l_0$, we obtain  from Cauchy integral formula that
\[a_m'(\l_0) = \frac{1}{2\pi i}\oint_{\gamma_\epsilon} \frac{a_m(\l)}{(\l-\l_0)^2} d\gamma.\]
Since $\Gamma_m(\l)$ is holomorphic on the closed curve $\gamma_\epsilon$, 
\begin{align*}
|a_m'(\l_0)| &\leq  \oint_{\gamma_\epsilon} \frac{|(\l-\l_0)\Gamma_m(\l)|}{|\l-\l_0|^2} |d\gamma|\\
& \leq M (1+m)^{\chi}  \oint_{\gamma_\epsilon} \frac{1}{|\l-\l_0|} |d\gamma| \\
& \leq M (1+m)^\chi.
\end{align*}
If $\l_0$ is not a pole of $c(\l)$ then $b(\l_0)=0$. Suppose that $\l_0$ is a pole of $c(\l)$ then $\l_0 = \mu_k= \rho-1-m_1^+-m_2^+ -2k >0$, for some $k \in \bN_0$. Using the property $\Gamma(z+1) = z\Gamma(z)$, we have
\begin{align*}
    \Gamma((-\rho + \lambda + 1+m_1^+ + m_2^+)/2) &= \frac{\Gamma((-\rho + \lambda + 1+m_1^+ + m_2^+)/2 +k+1)}{\prod_{i=0}^{k}((-\rho + \lambda + 1+m_1^+ + m_2^+)/2 +i)}\\
    &= 2^{k+1}\frac{\Gamma((\l-\mu_k)/2 +1)}{\prod_{i=0}^{k} (\l-\mu_i)}.
\end{align*}

 Thus, using $2^{1-2z}\sqrt{\pi}\Gamma(2z)= \Gamma(z)\Gamma(z+1/2)$ and $\Gamma(z + \alpha)/\Gamma(z+\beta) \sim z^{\alpha-\beta}$ ( \cite[1.18 (3)]{E81}), we obtain that for $d = {\text{deg}}\, p_R$ \begin{align*}
    |b(\l)| & \leq \left|(\l-\mu_k)p_R(\l) 2^{2\lambda} \frac{\Gamma((\rho+\lambda)/2)\Gamma(-\lambda)\Gamma((\lambda-\rho+1+m_1^+ + m_2^+)/2)}{\Gamma((\rho-\lambda)/2)\Gamma(\lambda)\Gamma((-\rho - \lambda + 1+m_1^+ + m_2^+)/2)}\right|\\
    &\leq \left| \frac{(\l-\mu_k)p_R(\l)\Gamma((\lambda-\mu_k)/2 +1)}{\prod_{i=0}^k(\l - \mu_i)}\frac{2^{2\lambda+k+1}\Gamma((\rho+\lambda)/2)\Gamma(-\lambda)}{\Gamma((\rho-\lambda)/2)\Gamma(\lambda)\Gamma((-\rho - \lambda + 1+m_1^+ + m_2^+)/2)}\right|\\
    &\leq \left|\frac{p_R(\l)\Gamma((\lambda-\mu_k)/2 +1)}{\prod_{i=0}^{k-1}(\l - \mu_i)}\frac{2^{2\lambda+k+1}\Gamma((\rho+\lambda)/2)\Gamma(-\lambda)}{\Gamma((\rho-\lambda)/2)\Gamma(\lambda)\Gamma((-\rho - \lambda + 1+m_1^+ + m_2^+)/2)}\right|\\
    &\leq M (1+|\l|)^{d}.
\end{align*}  
Thus, \[|b(\l_0)| \leq M (1+|\l_0|)^d,\]
and by Cauchy's integral formula we have
\[|b'(\l_0)|\leq M (1+|\l_0|)^{d}.\]

%\begin{align*}
%    b(\l) &= (\l-\l_0) p_R(\l) \frac{2^{2\l}\Gamma((\rho+\l)/2 + k)}{((\rho+\l)/2)((\rho+\l)/2+1)...((\rho+\l)/2+k-1) }\frac{\Gamma(-\l + l)}{(-\l)(-\l+1)...(-\l+l-1)}\\ 
%    & \times \frac{\Gamma((\rho +\l-q)/2 + m)}{((\rho +\l-q)/2)((\rho +\l-q)/2+1)...((\rho +\l-q)/2+m-1)} \frac{1}{\Gamma((\rho-\l)/2)\Gamma}.
%\end{align*}

%Choosing $k,l,m$ in such a way that $\Re ((\rho+\l)/2 + k), \Re (-\l + l), \Re ((\rho +\l-q)/2 + m) >0 $. 

\begin{comment}

 Therefore, for $\l_0 \notin \mathbb{N}_0$  we have
\begin{align*}
    p_m(t) &= \eta_w\Bigg[e^{(\l_0-\rho)t}a_m(\l_0)p_R'(\l_0) + e^{(\l_0-\rho)t}a_m'(\l_0)p_R(\l_0)\\
    &+ e^{(\l_0-\rho)t}a_m(\l_0)p_R(\l_0)t
     + e^{(-\l_0-\rho)t}b'(\l_0)\Gamma_m(-\l_0) \Bigg]e^{-mt}.
\end{align*} 
\end{comment}
Now, we will estimate $\Gamma_m'(-\l_0)$. Since $\Gamma_m$ is holomorphic at $-\l_0$, using Theorem~\ref{thm:Gamma}, we get 
\begin{align*}
    |\Gamma_m'(-\l_0)| &\leq \oint_{\gamma}\frac{|\Gamma_m(-\l)|}{|\l+\l_0|^2}|d\gamma|\\
    &\leq M/2 (1+m)^\chi,
\end{align*}
where $\gamma$ is a circle centered at $-\l_0$ and radius $<\frac 12$.

Hence for a fixed $t_0>0$,
\bes |\Gamma_m(-\lambda_0)|, \Gamma'_m(-\lambda_0), |a_m(\lambda_0)|,  |a_m'(\lambda_0)|, |b(\lambda_0)|, |b'(\lambda_0)| \ees  all are dominated by $e^{m t_0}.$ Let $\text{deg} \,p_R=d$.
Then, we have
\begin{align*}
   &  \left|F_w(\lambda, \eta)(t) e^{(-\lambda_0 +\rho)t}-\eta_wp_R(\lambda_0)\right| \leq e^{-2\Re\lambda_0 t} |\eta_w b(\lambda_0)t| + e^{-2\Re\lambda_0 t} |\eta_w b'(\lambda_0)| \\ 
   &+ \sum M |\eta_w| \left[(1 +m)^{\chi+d} + t(1 +m)^{\chi+d} + e^{-2\Re\lambda_0 t} t(1 +m)^{\chi+d} + e^{-2\Re\lambda_0 t} (1 +m)^{\chi+d}\right] e^{-mt}\\
    &\leq e^{-2\Re\lambda_0 t} |\eta_w b(\lambda_0)t| + e^{-2\Re\lambda_0 t} |\eta_w b'(\lambda_0)| + M \,t\sum_{m=1}^\infty |\eta_w|e^{-m(t-t_0)}\\
     &\leq e^{-2\Re\lambda_0 t} |\eta_w b(\lambda_0)t| + e^{-2\Re\lambda_0 t} |\eta_w b'(\lambda_0)| + M\, |\eta_w|t e^{-(t-t_0)} \left(\Sigma_{m=0}^\infty e^{-m(t-t_0)}\right)\\
  &\leq e^{-2\Re\lambda_0 t} |\eta_w b(\lambda_0)t| + e^{-2\Re\lambda_0 t} |\eta_w b'(\lambda_0)| + M \,|\eta_w|t e^{-(t-t_0)} \frac{1}{1-e^{-(t-t_0)}}.  \end{align*}
This implies that, for $\Re\lambda_0>0$,
\bes
\left|F_w(\lambda, \eta)(t) e^{(-\lambda_0 +\rho)t}-\eta_w p_R(\lambda_0)\right|\rightarrow 0,
\ees
as $t\rightarrow \infty$. Therefore, if $\lambda_0$ is not a zero of $p_R$ (or, equivalently if it is not a pole of $E^\circ_w(\lambda, \eta)(t)$) then 
\bes
\left|F_w(\lambda_0, \eta)(t)\right| \asymp |p_R(\lambda_0)\eta_w| e^{(\Re\lambda_0-\rho)t},
\ees
as $t\rightarrow \infty$. Hence,  if $\Re\lambda_0>0$ and if $\lambda_0$ is not a pole of $E^\circ_w(\lambda, \eta)(t)$ then 
\be \label{asymp:eisen}
|\eE_w(\lambda_0, \eta)(t)|\asymp |\eta_w|e^{(\Re\lambda_0-\rho)t},
\ee
as $t\rightarrow \infty$. 

\noindent{\bf Subcase 1: $\Re\lambda_0> \rho$:}  We observe that $E^\circ_w(\lambda, \eta)(t)$ has no pole for $\Re\lambda> \rho$.
Therefore, it follows from (\ref{asymp:eisen}) that for $\Re\lambda_0> \rho$, $E^\circ_w(\lambda_0, \eta)(\cdot)$ is not bounded. \\

\noindent{\bf Subcase 2: $\Re\lambda_0=\rho$:}  We also observe that $E^\circ(\lambda, t)$ has no pole for $\Re\lambda=\rho$.
Therefore, it follows from (\ref{asymp:eisen}) that for $\Re\lambda_0=\rho$, $E^\circ(\lambda_0, \cdot)$ is a bounded function.\\

\noindent{\bf Subcase 3: $0 \leq \Re \l_0 <\rho$:}  By using the estimates of $|\Gamma_m(-\l_0)|, |\Gamma'_m(-\l_0)|,|a_m(\l_0)|, |a_m'(\l_0)|, |b(\l_0)|$ and $|b'(\l_0)|$ we obtain that
\[|p_m(t)| \leq M |\eta_w|e^{(\Re \l_0 -\rho)t} [(1+m)^\chi(1+t)(1+|\l_0|)^d]e^{-mt}.\] 
By using $(1+m)^\chi\leq (m+\chi)!/m!$, we obtain the following inequality
\begin{equation}\label{ineq: Gamma}
    \sum_{m=0}^\infty(1+m)^\chi e^{-mt} \leq \sum_{m=0}^\infty \frac{(m+\chi)!}{m!}e^{-mt} = \chi! (1-e^{-t})^{-\chi-1}.
\end{equation}
Using the above we obtain that
\[|F_w(\l_0, \eta)(t)| \leq M |\eta_w|(1+|\l_0|)^d (1+t)e^{(\Re \l_0 -\rho)t}.\]
Thus, in this case $F_w(\l_0,\eta)(t)$ is a bounded function in $t$. Note that, when $\Re \l_0 = 0$, then $\eE_w$ has no singularities. Thus, for $\l_0 = ix$ with $x \neq 0$, we have (from \ref{asymp:eisen})
\[|\eE_w(ix, \eta)(t)| \asymp |\eta_w| e^{-\rho t}.\]
\\
{\bf Case 2: $\l_0 \in -\bN_0$:} From Lemma~\ref{lemm: pole-zero-description},  we have that $\lambda_0=-k$ is a zero of $p_R\eE_w$. Thus,  $p_R(\l_0)E^\circ_w(\lambda_0, \eta)(t)=0$  for all $t>0$. \\

{\bf Case 3: $\Re\lambda_0<0$:}
In this case we assume that $\l_0$ is not a negative integer and $-R<\Re\lambda_0<0$. 
Then using (\ref{HC-series}) and  (\ref{eq:HCcombined}) we have,
\begin{equation}\label{eq: Re<00}
    \begin{split}
         F_w(\l_0,\eta)(t) &= \der \big|_{\l=\l_0} \eta_w \sum_{m=0}^\infty \Big[e^{(\l-\rho)t}(\l-\l_0)p_R(\l)\Gamma_m(\l)\\
    &+ e^{(-\l -\rho)t}(\l-\l_0)p_R(\l)c(\l)\Gamma_m(-\l)\Big]e^{-mt}.
    \end{split}
\end{equation}

Let \bes
g_m(\l) = (\l-\l_0)p_R(\l)\Gamma_m(\l),  h(\l) = p_R(\l)c(\l) \text{ and } d_m(\l)=(\l-\l_0)\Gamma_m(-\l).\ees
We observe that all these are holomorphic at $\l_0$. 

Again writing $F_w(\l_0,\eta)(t)$ as
\[F_w(\l_0,\eta)(t) = p_0(t) + \sum_{m=1}^\infty p_m(t),\]
with \[p_0(t) = \eta_we^{(\l_0-\rho)t}p_R(\l_0)+ \eta_we^{(-\l_0-\rho)t}h(\l_0)\] and
\[\begin{split}
    p_m(t) &= \eta_w [e^{(\l_0-\rho)t}g_m'(\l_0) + e^{(\l_0-\rho)t}tg_m(\l_0)
    + e^{(-\l_0-\rho)t}d_m(\l_0)h'(\l_0)\\
    &+e^{(-\l_0-\rho)t}d_m'(\l_0)h(\l_0)+ e^{(-\l_0-\rho)t}(-t)d_m(\l_0)h(\l_0)].
\end{split} \]

 Similar to case $\Re \l_0>0$, the functions $g_m(\l), h(\l)$ and $d_m(\l)$ can be dominated by $e^{m t_0}$ for some $t_0>0$. Then calculating similarly as above we get that
\bes
\left|F_w(\lambda_0, \eta)(t) e^{(\lambda_0 +\rho)t}- \eta_w h(\lambda_0)\right| \rightarrow 0,
\ees
as $t\rightarrow \infty$. 
We have $p_R(\l)\eE_w(\l,\eta)(t) \equiv 0$ if and only if $\l_0 \in -\mathbb{N}_{0}$ as the the polynomial $p_R$ encaptures the poles of $\eE$ described in Lemma~\ref{lemm: pole-zero-description}. It follows that when $\l_0 \notin -\mathbb{N}_{0}$ and $\Re \l_0 <0$ we have  $h(\l_0) \not= 0$. \\

\noindent{\bf Subcase 1: $-R\leq \Re\lambda_0<-\rho$ and $\l_0\notin -\bN_0$:}  In this case, we first observe (from the definition of $h$) that $h(\lambda_0)\not=0$. We have
\bes
\left|F_w (\lambda_0, \eta)(t)\right|\asymp |\eta_w|e^{-(\Re\lambda_0 +\rho)t},
\ees
as $t\rightarrow\infty$.  This implies $F_w(\lambda_0, \eta)(\cdot)$ is unbounded, in this case.

\vspace{.3cm}

\noindent{\bf Subcase 2: $\Re \l_0 = -\rho$: } Suppose $\rho$ is not a natural number, then $h$ is non zero at $\Re \l_0 = -\rho$ and 
\[|F_w(\l_0, \eta)(t)| \asymp  |\eta_w|. \]
Thus, $F_w(\l_0,\eta)(\cdot)$ is bounded. 

If $\rho$ is a natural number and $\Re \l_0 = -\rho$, $\l_0 \not= -\rho$, then also $h(\l_0)$ is non-zero. Therefore, $F_w(\l_0,\eta)(\cdot)$ is bounded in this case as well. \\

\noindent{\bf Subcase 3: $-\rho< \Re\lambda_0<0$ and $-R < \Re \l_0$:} It follows from (\ref{eq: Re<00}) that 
\bes
\left|F_w (\lambda_0, \eta)(t)\right| \leq C |h(\lambda_0)|(1+t) e^{-(\Re\lambda_0 +\rho)t},
\ees
as $t\rightarrow\infty$.
If $-\rho\leq \Re\lambda_0<0$, then $\Re\lambda_0 +\rho\geq 0$ and hence $F_w(\lambda_0, \eta)(t)$ is bounded.
\end{proof}

From the proof, we get the following improved estimate of normalized Eisenstein integrals (c.f. (\ref{est: normalized-Eisenstein})):
\begin{corollary}\label{cor:estimate} For $R>0$ and  a fixed $\eta \in \C^{\cW}$, there exists a constant $M>0$ such that for all $\lambda_0 \in -a^*(R)$ and $t>0$,  \be \label{eqn:estimate99}
  |p_R(\lambda_0)E^\circ_w(\lambda_0,\eta)(t)| \leq M \|\eta\|(1 + |\lambda_0|)^{\text{deg}\, p_R} (1+t) e^{(|\Re\lambda_0|-\rho)t}.
  \ee
  Moreover, for each fixed $\l_0$ we have the following:
\begin{enumerate}
    \item If $\l_0$ is a pole of $\eE_w$, then \bes
  |p_R(\lambda_0)E^\circ_w(\lambda_0,\eta)(t)| \leq M \|\eta\|(1 + |\lambda_0|)^{\text{deg}\, p_R} (1+t) e^{(|\Re\lambda_0|-\rho)t}.
  \ees
  \item If $\l_0$ is a non-positive integer then $p_R(\l_0)\eE_w(\l_0,\eta)(t) = 0$.
  \item If $\l_0$ is not a pole of $\eE$ and also not a non-positive integer, then
  \[|\eE_w(\l_0,\eta)(t)| \asymp M |\eta_w|e^{(|\Re\l_0|-\rho)t}.\]
\end{enumerate}

\end{corollary}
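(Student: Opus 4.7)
The plan is to extract the quantitative estimates that were implicitly proved during the case analysis of Theorem~\ref{thm: HJO}, and then read off the three ``moreover'' items as consequences. The global estimate \eqref{eqn:estimate99} is the unified upper bound that needs to survive across all three of Case~1, Case~2, Case~3 of the previous proof, while tracking polynomial dependence on $|\lambda_0|$. Parts (1)--(3) then simply record the three regimes (pole, non-positive integer zero, generic point) for fixed $\lambda_0$.

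First I would set up the uniform bound. Writing $F_w(\lambda,\eta)(t)=p_R(\lambda)E^\circ_w(\lambda,\eta)(t)$ and using the expansion \eqref{eq:HCcombined} together with the series \eqref{HC-series}, I would repeat the derivative trick
\[
F_w(\lambda_0,\eta)(t)=\frac{d}{d\lambda}\Big|_{\lambda=\lambda_0}(\lambda-\lambda_0)F_w(\lambda,\eta)(t),
\]
which is valid simultaneously for $\lambda_0\in 1/2\mathbb N_0$ and for regular points. Expanding term-by-term gives a power-exponential series in $t$ whose coefficients involve $a_m(\lambda_0),a_m'(\lambda_0),b(\lambda_0),b'(\lambda_0),\Gamma_m(-\lambda_0),\Gamma_m'(-\lambda_0)$, exactly as in the proof of Theorem~\ref{thm: HJO}. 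For $\lambda_0\in -a^\ast(R)$, applying Theorem~\ref{thm:Gamma} with any $R'>\rho$ (so that $q_{R'}$ has no zeros on the relevant region) combined with Cauchy's integral formula on small circles around $\lambda_0$ yields a uniform bound of the type
\[
|a_m(\lambda_0)|+|a_m'(\lambda_0)|+|\Gamma_m(-\lambda_0)|+|\Gamma_m'(-\lambda_0)|\leq M(1+m)^{\chi},
\]
and for $b,b'$ (which are polynomial times ratios of gammas) the asymptotics $\Gamma(z+\alpha)/\Gamma(z+\beta)\sim z^{\alpha-\beta}$ give $|b(\lambda_0)|+|b'(\lambda_0)|\leq M(1+|\lambda_0|)^{\deg p_R}$. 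Summing the resulting series against $e^{-mt}$ using the elementary inequality \eqref{ineq: Gamma} produces
\[
|F_w(\lambda_0,\eta)(t)|\leq M\,\|\eta\|(1+|\lambda_0|)^{\deg p_R}(1+t)e^{(|\Re\lambda_0|-\rho)t},
\]
which is \eqref{eqn:estimate99}. Part (1) is then this estimate specialized to a pole of $E^\circ_w$.

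Part (2) is immediate from Lemma~\ref{pole-zero-description}(6): $-\mathbb N_0$ consists precisely of zeros of $E^\circ_w(\cdot,\eta)(t)$, so $p_R(\lambda_0)E^\circ_w(\lambda_0,\eta)(t)=0$. Part (3) is a restatement of the asymptotic \eqref{asymp:eisen} already derived in the proof of Theorem~\ref{thm: HJO}, which shows that when $\lambda_0$ is neither a pole of $E^\circ_w$ nor a non-positive integer, the leading behaviour of $E^\circ_w(\lambda_0,\eta)(t)$ as $t\to\infty$ is governed by either $\eta_w\Phi_{\lambda_0}(t)$ (if $\Re\lambda_0\geq 0$) or $c(\lambda_0)\eta_w\Phi_{-\lambda_0}(t)$ (if $\Re\lambda_0<0$), both of which give a lower bound of the form $|\eta_w|e^{(|\Re\lambda_0|-\rho)t}$ matching the upper bound.

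The main obstacle I anticipate is keeping the polynomial growth in $|\lambda_0|$ honest across the whole tube $-a^\ast(R)$: the Cauchy-formula estimates for $a_m'(\lambda_0)$ and $\Gamma_m'(-\lambda_0)$ must use circles whose radii stay bounded away from the discrete set of poles uniformly in $\lambda_0$, which forces one to pick a fixed small $\epsilon<1/2$ and to absorb the resulting constants into $M$. Once this uniformity is in place, the rest is a bookkeeping exercise on the three regimes already handled in Theorem~\ref{thm: HJO}.
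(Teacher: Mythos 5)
Your proposal is correct and coincides with the paper's treatment: the corollary is presented there with no independent proof beyond the observation that it follows from the proof of Theorem~\ref{thm: HJO}, and your extraction of the uniform bound from the coefficient estimates on $a_m, a_m', b, b', \Gamma_m(-\lambda_0), \Gamma_m'(-\lambda_0)$, together with reading off parts (1)--(3) from Lemma~\ref{pole-zero-description} and the asymptotics \eqref{asymp:eisen}, is exactly that argument.
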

The above estimate (\ref{eqn:estimate99}) has been obtained  in \cite{A01} for $R = 1/4$ using Jacobi functions for real hyperbolic spaces $\mathrm{\mathrm{SO}}_e(p,q)/\mathrm{\mathrm{SO}}_e(p-1,q)$ with $q>1$ and in the general case we had only a rough estimate (\ref{est: normalized-Eisenstein}). 
%Using the estimate of Jacobi function, we also had the same estimate (\ref{est:Jacobi}).

\begin{corollary}\label{cor:derivativeest}
    For $R>0$ and  a fixed $\eta \in \C^{\cW}$, there exists a constant $M>0$ such that for all $\lambda \in -a^*(R)$ and $t>0$,  \bes
  | \left(\frac{\partial}{\partial \lambda}\right)^n p_R(\lambda)E^\circ_w(\lambda,\eta)(t)| \leq M \|\eta\| (1 + |\lambda|)^{\text{deg}\, p_R} (1+t)^{n+1} e^{(|\Re\lambda|-\rho)t}.
  \ees  
\end{corollary}
Proof of this follows from (\ref{eqn:estimate99}) and the Cauchy's integral formula by integrating over a circle with radius $\frac{1}{1+t}$.

\begin{comment}

\begin{remark}\label{remark:1}
    Let $\lambda_0$ be such that it not a pole of $\lambda\mapsto E^\circ_w(\lambda, \eta)(t)$. Then for such $\lambda_0$, we have
\bes
  |E^\circ_w(\lambda_0,\eta)(t)| \leq M \|\eta\| (1+t) e^{(|\Re\lambda_0|-\rho)t}, 
  \ees
  for all $t>0$.
\end{remark}
\end{comment}
%-----------------------------------------------------------------------------------------------------------------------------------

%-----------------------------------------------------------------------------------------------------------------------------------

\section{Hausdorff-Young inequality and Riemann-Lebesgue lemma}

 Our version of the Helgason-Johnson's theorem says that $p_R(\l)\eE_w(\l,\eta)(t)$ is bounded in $t$ for a fixed $\l \in -a^*(R) \cap S_1$. As a consequence of this theorem and Corollary~\ref{cor:estimate}, we have that for $f \in L^1(K \backslash G/H)$ the Fourier transform $\cF f$ extends meromorphically to $S_1$. Moreover, for a fixed $R > 0$, the function $p_R(-\l)\cF f(\l)$ is holomorphic on  $S_1^0 \cap a^*(R)$ and continuous on ${S_1} \cap a^*(R)$. 
 Now, we consider the region \[S_r = \{z \in \C{}: |\Re z| \leq (2/r -1)\rho\},\] for $1 \leq r \leq 2$. Observe that $J(t) \asymp e^{2 \rho t}$. Let $r'$ be such that $1/r' + 1/r =1$. Denote $C_c^\infty(\ia_{q})$ to be the space of all complex valued smooth functions with compact support on $\ia_{q}$. %That is, $C_c^\infty(\ia_{q}, \C^{\cW}) = \{g = (g_1,...,g_{|\cW|}): g_i\text{'s are smooth complex valued functions with compact support on}\; \ia_{q} \}$.

\begin{proposition}\label{prop:Eest}
	Let $1 \leq r \leq 2$. For $\l \in S_r^\circ \cap -a^*(R)$,  $w \in \cW$ and $\eta \in \C^{\cW}$, the function $t \mapsto p_R(\l)\eE_w(\l, \eta)(t)$ is in $L^{r'}((0,\infty), J(t)dt)$, where $1/r+1/r' = 1$.
\end{proposition}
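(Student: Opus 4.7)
The plan is to apply the pointwise estimate from Corollary~\ref{cor:estimate} directly and then verify that the resulting bound is in $L^{r'}$ with respect to $J(t)\,dt$. First I would recall that, since $\lambda \in -a^\ast(R)$, Corollary~\ref{cor:estimate} yields
\[
|p_R(\lambda) E^\circ_w(\lambda,\eta)(t)| \leq M\,\|\eta\|\,(1+|\lambda|)^{\deg p_R}\,(1+t)\,e^{(|\Re\lambda|-\rho)t},
\]
for all $t>0$, with a constant $M$ independent of $t$.

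Next I would split the integral $\int_0^\infty |p_R(\lambda)E^\circ_w(\lambda,\eta)(t)|^{r'} J(t)\,dt$ into the pieces $\int_0^1$ and $\int_1^\infty$. For $t$ near $0$, the normalized Eisenstein integral is smooth at the origin (it is real analytic on $X$) hence bounded on $[0,1]$, and the Jacobian $J(t) = \cosh^{p-1}t\,\sinh^{q-1}t$ is comparable to $t^{q-1}$, which is integrable on $[0,1]$ for $q \geq 1$. So the integral near $0$ is finite.

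For $t\geq 1$, I would use $J(t) \asymp e^{2\rho t}$ and insert the estimate above to get
\[
\int_1^\infty |p_R(\lambda)E^\circ_w(\lambda,\eta)(t)|^{r'} J(t)\,dt \;\lesssim\; \|\eta\|^{r'}(1+|\lambda|)^{r'\deg p_R}\int_1^\infty (1+t)^{r'}\, e^{\bigl(r'(|\Re\lambda|-\rho)+2\rho\bigr)t}\,dt.
\]
This last integral converges iff $r'(|\Re\lambda|-\rho)+2\rho < 0$, i.e.\ iff $|\Re\lambda| < \rho\bigl(1-\tfrac{2}{r'}\bigr) = (\tfrac{2}{r}-1)\rho$, where I used $1/r+1/r'=1$. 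This is precisely the condition $\lambda \in S_r^\circ$, so the integral is finite as required.

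There is no real obstacle here; the statement reduces to an exponential-decay bookkeeping calculation once Corollary~\ref{cor:estimate} is in hand. The only minor care needed is the $t\to 0^+$ endpoint, which requires invoking the regularity of $E^\circ_w(\lambda,\eta)$ at the origin together with the mildness of the singularity $t^{q-1}$ in $J(t)$; the polynomial factor $(1+t)^{r'}$ coming from the estimate is absorbed harmlessly since the decaying exponential has a strict negative exponent on $S_r^\circ$.
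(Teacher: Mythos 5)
Your argument is correct and follows essentially the same route as the paper: apply the pointwise bound of Corollary~\ref{cor:estimate}, use $J(t)\asymp e^{2\rho t}$, and check that the exponent $r'(|\Re\lambda|-\rho)+2\rho$ is negative exactly when $\lambda\in S_r^\circ$. The only differences are cosmetic — the paper integrates over all of $(0,\infty)$ at once (since $J(t)\leq Ce^{2\rho t}$ globally, no split at $t=1$ is needed), and you correctly retain the $(1+t)^{r'}$ factor that the paper's displayed computation silently drops.
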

\begin{proof} Using the estimate of corollary~\ref{cor:estimate} we have
    \begin{align*}
\left(\int_{0}^\infty |p_R(\l)\eE_w(\l,\eta)(t)|^{r'} J(t)dt\right)^{1/r'} &\leq M \|\eta\| \left(\int_{0}^\infty \left((1+|\l|)^{d} (1+t)e^{(|\Re \l|-\rho)t}\right)^{r'} e^{2\rho t}dt\right)^{1/r'}\\
& \leq M \|\eta\| (1+|\l|)^{d}\left(\int_{0}^\infty (1+t)^{r'}e^{r'(|\Re \l|-\rho)t + 2\rho t}dt\right)^{1/r'}.
\end{align*}
 
For the integral to converge we must have that the exponent $r'(|\Re \l|-\rho) + 2\rho <0 $. That is, 
$|\Re \l| < (1-2/r')\rho = (2/r-1)\rho$ which implies that if 
$ \l \in S_r^\circ $ then $p_R(\l)\eE_w(\l,\eta)(t)$ is in $L^{r'}$ for any $\eta \in \C^{\cW}$ and $w \in \cW$. 
\end{proof}
 
\begin{proposition}\label{prop:LrFT}
    For $1 \leq r \leq 2$ and $f \in L^r(K \backslash G/H)$, the Fourier transform exists on $S_r$ and extends meromorphically to the strip $S_r$. Furthermore, for a fixed $R > 0$, the function $p_R(-\l)\cF f(\l)\eta$ is holomorphic on  $S_r^0 \cap a^*(R)$ and has continuous extension on ${S_r} \cap a^*(R)$ for any $\eta \in \C^\cW$. 
\end{proposition}
\begin{proof}
    From Proposition~\ref{prop:Eest} and H\"older's inequality it follows that for $\l \in S_r \cap a^*(R)$
    \begin{align*}
        |p_R(-\l)\cF f(\l)\eta| & \leq \sum_{w\in \cW} \int_0^\infty |f(a_tw\cdot x_0) p_R(-\l)\eE_w(-\l,\eta)(t)|J(t)dt\\
        & \leq M \|f\|_{L^r(K \backslash G/H)} \|\eta\| (1+|\l|)^d.
    \end{align*}
    Thus, the $p_R(-\l)\cF f(\l)\eta$ exists and is continuous on the strip $S_r\cap a^*(R)$. Moreover, using Corollary~\ref{cor:derivativeest} we obtain
    \begin{align*}
        \left|\frac{\partial^m}{\partial \l^m}p_R(-\l)\cF f(\l)\eta\right| & \leq \sum_{w\in \cW} \int_0^\infty \left|f(a_tw\cdot x_0) \frac{\partial^m}{\partial \l^m}p_R(-\l)\eE_w(-\l,\eta)(t)\right|J(t)dt\\
        & \leq M \|f\|_{L^r(K \backslash G/H)}\|\eta\| (1+|\l|)^d \left(\int_0^\infty (1+t)^{(m+1)r'}e^{r'(|\Re \l|-\rho)t + 2\rho t}dt\right)^{1/r'},
    \end{align*}
    which converges if and only if $\l \in S_r^0 \cap a^*(R)$. Thus, $p_R(-\l)\cF f(\l)\eta$ is holomorphic in $S_r^0 \cap a^*(R)$.
\end{proof}

Then we have the following lemma, whose proof follows from the proof of Proposition~\ref{prop:LrFT}.

\begin{lemma}\label{lemma: H-YL1}
  Let $\l_0 \in S_1 \cap a^*(R)$ be a real number,  $w \in \cW$, $\eta \in \C^{\cW}$ and $d = \text{deg}\, p_R$. Then, we have 
  \[\sup_{\l \in \mathbb R}\left|\frac{p_R(-(\l_0 + i\l))\cF f(\l_0 + i\l)\eta}{(1+\l_0 + i\l)^d}\right| \leq M \|f\|_1 \|\eta\|, \quad {f \in L^1(K\backslash G/H)} ,\]
    for some $M > 0$ and for any $\eta \in \C{}^{\cW}$.
  
\end{lemma}

Now we prove an analogue of the Hausdorff-Young inequality.

\begin{theorem}\label{thm:HY}
	Let $1\leq r \leq 2$ and $\l_0 \in S_r \cap a^*(R)$ be real number.  Then there exists $M>0$ such that,
	\[\left( \int_{ \mathbb R} \left|\frac{p_R(-(\l_0 + i\l))\cF f(\l_0 + i\l)\eta}{(1+\l_0 + i\l)^d}\right|^{r'} d\l \right)^{1/r'} \leq M \|f\|_r \|\eta\|, \quad {f \in L^r(K\backslash G/H)}, \]
     for any $\eta \in \C{}^{\cW}$, with $d = \text{deg} \, p_R$.
\end{theorem}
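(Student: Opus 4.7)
The plan is to apply Stein's analytic interpolation theorem between the two natural endpoints: the $L^1\to L^\infty$ estimate of Lemma \ref{lemma: H-YL1} (valid for $\l_0 \in S_1 \cap a^*(R)$) at $r=1$, and the Plancherel inequality (\ref{Plncherel}) (essentially an $L^2\to L^2$ statement at $\l_0=0$) at $r=2$. Using the functional equation $\cF f(-\l)(\eta) = \cF f(\l)(C^0(-1,\l)\eta)$ I first reduce to $\l_0 \geq 0$. Defining $\theta \in [0,1]$ by $1/r = 1-\theta/2$, the hypothesis $\l_0\in S_r$ reads $0 \leq \l_0 \leq (1-\theta)\rho$, and without loss of generality I may assume $R\geq \rho$, since otherwise $S_r\cap a^*(R)$ is only smaller and $p_R$ divides $p_{R'}$ for $R'>R$.

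For $f\in C_c^\infty(K\backslash G/H)$ (dense in $L^r$) and fixed $\eta\in \C^{\cW}$, introduce the analytic family
\[T_z f(\l) = e^{(z-\theta)^2}\,\frac{p_R\bigl(-(\mu(z)+i\l)\bigr)\,\cF f(\mu(z)+i\l)(\eta)}{(1+\mu(z)+i\l)^d}, \qquad \mu(z) = \l_0\frac{1-z}{1-\theta},\]
on the closed strip $\Sigma=\{z\in\C:0\leq \Re z\leq 1\}$. The Gaussian regulariser $e^{(z-\theta)^2}$ provides decay in $|\Im z|$ and equals $1$ at $z=\theta$, so it does not affect the target constant. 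Because $\Re\mu(z)\in[0,\l_0/(1-\theta)]\subseteq[0,\rho]\subseteq a^*(R)$, Proposition \ref{prop:paleyweiner} ensures that the numerator is holomorphic along every vertical shift in the relevant range, with the appropriate polynomial decay in $|\l|$; hence $T_z f(\l)$ is holomorphic in $z\in\mathrm{int}\,\Sigma$ and continuous on $\overline\Sigma$.

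On the left edge $z=iy$, $\Re\mu(iy)=\l_0/(1-\theta)\in S_1\cap a^*(R)$, and the $L^\infty$-preserving translation $\l\mapsto \l+\l_0 y/(1-\theta)$ reduces the supremum to Lemma \ref{lemma: H-YL1}, giving
\[\|T_{iy}f\|_{L^\infty(\mathbb R)}\leq M_0\, e^{\theta^2-y^2}\,\|f\|_1\|\eta\|.\]
On the right edge $z=1+iy$, $\mu(1+iy)$ is purely imaginary, and the analogous translation reduces the $L^2$-integrand to the Plancherel integrand multiplied by the bounded factor $p_R(-i\cdot)/(1+i\cdot)^d$, so (\ref{Plncherel}) yields
\[\|T_{1+iy}f\|_{L^2(\mathbb R)}\leq M_1\, e^{(1-\theta)^2-y^2}\,\|f\|_2\|\eta\|.\]
Both estimates decay Gaussianly in $|y|$, so Stein's analytic interpolation theorem produces $\|T_\theta f\|_{L^{r'}(\mathbb R)}\leq M\|f\|_r\|\eta\|$. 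Since $\mu(\theta)=\l_0$ and the regulariser equals $1$ at $z=\theta$, $T_\theta f$ coincides with the function in the statement, so the required inequality holds for $f\in C_c^\infty$, and extends to all of $L^r(K\backslash G/H)$ by density. The main technical point will be verifying the holomorphy and uniform moderate-growth bounds of the family throughout $\Sigma$, which rests on Proposition \ref{prop:paleyweiner} together with the cancellation of singularities of $\cF f$ built into $p_R$ on $a^*(R)$.
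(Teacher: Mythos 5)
Your proposal is correct and follows essentially the same route as the paper: both define an analytic family of operators by translating the spectral parameter along the strip, bound one edge by the $L^1\to L^\infty$ estimate of Lemma \ref{lemma: H-YL1} and the other by the Plancherel inequality (\ref{Plncherel}), and conclude by Stein's complex interpolation at the intermediate point. The only differences are cosmetic (orientation of the strip, the explicit Gaussian regulariser, and the reduction to $\l_0\geq 0$ via the functional equation).
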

\begin{proof}
	We will equip the space $i\mathbb R$ with Lebesgue measure $d\l$. Let $1\leq r \leq 2$ and fix an arbitrary $\l_0 \in S_r \cap a^*(R) $ such that $\l_0 \in \R{}$. Write $\mu = (2/r -1)^{-1}\l_0$. For every complex number $z$ such that $0 \leq \Re z \leq 1$ and $f \in C_c^\infty(K\backslash G/H)$, $\l \in i\mathbb R$,  we define a family of operators $T_{z}f(\l)$ by %$T_{z\mu}f(\l) \in (\C^{\cW})^*$, where 
    %is from the space of $C_c^\infty(K\backslash G/H)$ to the space of $(\C^{\cW},\C)$-valued measurable functions on $\ia_{q}$ by
	\[T_{z}f(\l)(\eta) = \frac{p_R(-\l -z\mu)\cF f(\l + z\mu)\eta}{(1+ \l + z\mu)^d}, \qquad \eta \in \C^{\cW}.\]

%Note that for a $K$-invariant function $\cF f(\l)$ is a linear form on $\C^{\cW}$. Hence, there exists $\widehat{f}(\l) \in \C^{\cW}$ (defined on \cite[p. 173]{HS94}) such that\[\cF f(\l)\eta = \langle\widehat{f}(\l),\eta \rangle,\] for any $\eta \in \C^\cW$. Moreover, all the results for $\cF f $ are also valid for $\widehat{f}$ by replacing the operator norm of $\cF f$ with $\C^{\cW}$ norm of $\widehat{f}$. That is,  $\|\widehat{f}\|_{L^{\infty}({\ia_{q}})} \leq \|f\|_{L^1(X)}$ and by Plancheral $\|\widehat{f}\|_{L^{2}_{\ia_{q}}} \leq \|f\|_{L^2}(X)$. Thus, we will consider a new family of operators $\ti{T}_{z\mu}$   from the space of $C_c^\infty(K\backslash G/H)$ to the space of $\C^{\cW}$-valued measurable functions on $\ia_{q}$. That is,
%\[\ti{T}_{z\mu}f(\l) = \frac{p(-\l -z\mu\rho)\widehat{ f}(\l + z\mu\rho)}{(1+ \l + z\mu\rho)^d},\]
%Using the estimate in Remark~\ref{remark:1} we have \[\|\cF{f}(\cdot)(\eta)\|_{L^{\infty}({\ia_{q}})} \leq M\|f\|_{L^1(X)} \|\eta\|,\] and
%By (\ref{Plncherel}) we have  \[\|\cF{f}(\cdot)(\eta)\|_{L^{2}(i\mathbb R)} \leq \|f\|_{L^2(X)}\|\eta\|. \]
 Fix $\eta \in \C^{\cW}$. Let $z = 1+ iy$. Then using Lemma~\ref{lemma: H-YL1} we get that 
    \begin{align*}
       \| {T}_{1+iy}f(\cdot)(\eta)\|_{L^\infty(i\R{})} & \leq  \sup_{\l \in \mathbb R}\left|\frac{p_R(-i\l -iy\mu-\mu)\cF{ f}(i\l + iy\mu + \mu)\eta}{(1+i\l +  iy\mu + \mu)^d}\right|\\
       & \leq C \sup_{\l \in \mathbb R} \left|\frac{p_R(-i\l -\mu)\cF{ f}(i\l + \mu)\eta}{(1+i\l + \mu)^d}\right|\\
       & \leq M  \|f\|_{L^1} \|\eta\|.
    \end{align*}
 Let $z = iy$. Then by (\ref{Plncherel}) and using the fact that on $i\R{}$ there are no poles of $\cF$, we have
\begin{align*}
    \|{T}_{iy}f(\cdot)\eta\|_{L^2(i\mathbb R)}^2 &= \int_{\mathbb R} \left|\frac{p_R(-i\l -iy\mu)\cF{ f}(i\l + iy\mu)\eta}{(1+i\l +  iy\mu)^d}\right|^2 d\l\\
    & \leq C \int_{\mathbb R} |\cF{ f}(i\l + iy\mu)\eta|^2 d\l\\
    & \leq C \int_{\mathbb R} |\cF{ f}(i\l)\eta|^2 d\l \\
    & \leq C \|f\|^2_{L^2(X)}\|\eta\| . 
\end{align*}

Now, consider the function 
\[\phi(z) = \int_{\R{}} {T}_{z}f(i\l)(\eta)\overline{g(i\l)} d\l, \quad f \in C_c^\infty(K \backslash G/H), \quad g \in C_c^\infty(i\R{}). \]
By definition of ${T}_{z}f$, the function $\phi$ is holomorphic on $0 < \Re z <1$ and continuous on $0 \leq \Re z \leq 1$. Using Lemma~\ref{lemma: H-YL1} we derive that $\|{T}_{z}f(\cdot)\eta\|_{L^\infty(i\R{})} \leq M \|f\|_{L^1(X)} \|\eta\|$
and
\[\log |\phi (z)| \leq M \log (\|f\|_{L^1(X)} \|\eta\| \|g\|_{L^1(i\R{})}).\]
Hence, $\{T_z\}$ is an admissible family of operators. Thus, by complex interpolation theorem  we have for $\theta_r = (2/r -1)$,
\[\|{T}_{\theta_r} f(\cdot)\eta \|_{L^{r'}(i\R{})} \leq M \|f\|_{L^r(X)}\|\eta\| \quad \forall f \in L^{r}(K \backslash X).\]
That is, for $\l_0 \in \R{}$ and $ \l_0 \in a^*(R)$,
\[ \left( \int_{\R{}} \left|\frac{p_R(-(\l_0 + i\l))\cF{ f}(\l_0 + i\l)(\eta)}{(1+\l_0 + \l)^d}\right|^{r'} d\l \right)^{1/r'} \leq M \|f\|_r \|\eta\|, \quad |\l_0| \leq (2/r -1)\rho.\]

We finally obtain that for real number $\l_0$ with $\l_0 \in S_r \cap a^*(R)$ ,  $w \in \cW$, $\eta \in \C^{\cW}$, we have

\[ \left( \int_{\R{}} \left|\frac{p_R(-(\l_0 + i\l))\cF f(\l_0 + i\l)\eta}{(1+\l_0 + \l)^d}\right|^{r'} d\l \right)^{1/r'} \leq M \|f\|_r \|\eta\|.\]

\end{proof}

\begin{Remark}
    If $-\l_0\in  S_r \cap -a^*(R)$ is not a singularity of $\eE$, that is $p_R(-\l_0) \neq 0$, then $p_R(-\l_0 - i\l)/(1+\l_0 + i\l)^d \asymp \, const$. Therefore, we can rewrite the statement as
    \[\left( \int_{\R{}} \left|\cF f(\l_0 + i\l)\eta\right|^{r'} d\l \right)^{1/r'} \leq M \|f\|_r \|\eta\|.\]
\end{Remark}
\begin{remark}
 Fix $\eta \in \C^{\cW}$. Let $1\leq r<2$, $f \in L^r(K\backslash G/H)$. Then $p_R(-\l)\cF f(\l)(\eta)$ exists almost everywhere on the lines $|\Re \l| = (2/r-1)\rho$ (cf. Proposition~\ref{prop:Eest}).
\end{remark}

We now prove an analogue of Riemann-Lebesgue Lemma.
\begin{theorem}
    Let $1 \leq r <2$ and $\l_0 \in S_r^\circ \cap a^*(R)$ be a real number. Then for any $f \in L^r(K \backslash G/H)$ and for each fixed $\eta$,
    \[\lim_{\l\in\mathbb R, |\l| \rightarrow \infty} |\cF f(\l_0 + i\l)\eta| = 0.\]
\end{theorem}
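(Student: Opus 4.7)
The plan is a standard density argument built on two ingredients: a Paley--Wiener type decay estimate for $\cF\phi$ when $\phi$ is smooth and compactly supported, and a uniform pointwise bound for $|\cF f(\l_0 + i\l)(\eta)|$ at large $|\l|$ when $f \in L^r(K\backslash G/H)$. The main obstacle is obtaining this uniform bound, since for $r > 1$ the Hausdorff--Young inequality of Theorem~\ref{thm:HY} only controls $\cF f$ in an $L^{r'}$ sense on the horizontal line $\Re\l = \l_0$, not pointwise. The key observation that makes this work is that the polynomial factor $(1+|\l|)^d$ appearing in Proposition~\ref{prop:Eest} is cancelled exactly by the growth of $|p_R(-\l_0 - i\l)|$ in the imaginary direction.

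First I would establish the uniform bound. Since $\l_0 \in S_r^\circ \cap a^*(R)$, the point $-\l_0 - i\l$ lies in $S_r^\circ \cap -a^*(R)$ for every real $\l$. Combining the calculation in the proof of Proposition~\ref{prop:Eest} with Corollary~\ref{cor:estimate} gives
\[
\bigl\|p_R(-\l_0 - i\l)\,\eE(-\l_0 - i\l, \eta)\bigr\|_{L^{r'}(X)} \;\leq\; C (1+|\l|)^{d}\|\eta\|,
\]
where $d = \deg p_R$ and $C$ depends on $\l_0$ but not on $\l$. Applying H\"older's inequality to the defining integral of $\cF f$ with respect to the measure~(\ref{eq:measure}) yields
\[
\bigl|p_R(-\l_0-i\l)\,\cF f(\l_0+i\l)(\eta)\bigr| \;\leq\; C (1+|\l|)^{d}\,\|f\|_r\,\|\eta\|.
\]
Since $p_R$ is a polynomial of degree $d$ with fixed zeros, $|p_R(-\l_0 - i\l)| \geq c_0 |\l|^d$ for all $|\l|$ larger than some threshold $\Lambda_0 = \Lambda_0(\l_0)$; dividing gives
\[
|\cF f(\l_0+i\l)(\eta)| \;\leq\; M_{\l_0}\,\|f\|_r\,\|\eta\|, \qquad |\l|\geq \Lambda_0.
\]

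Next, for $\phi \in C_c^\infty(K\backslash G/H)$, Proposition~\ref{prop:paleyweiner} furnishes, for every $n \in \mathbb N_0$,
\[
|p_R(-\l_0-i\l)\,\cF \phi(\l_0+i\l)(\eta)| \;\leq\; M_n (1+|\l|)^{-n} e^{r|\l_0|}\|\eta\|;
\]
dividing by $|p_R(-\l_0-i\l)|$ and choosing any $n > d$ shows $|\cF\phi(\l_0+i\l)(\eta)| = O((1+|\l|)^{d-n}) \to 0$ as $|\l|\to\infty$. Finally, given $\varepsilon > 0$, the density of $C_c^\infty(K\backslash G/H)$ in $L^r(K\backslash G/H)$ supplies $\phi$ with $\|f-\phi\|_r < \varepsilon / (2 M_{\l_0}\|\eta\| + 1)$; applying the uniform bound to $f-\phi$ and the decay to $\phi$ one gets
\[
|\cF f(\l_0+i\l)(\eta)| \leq |\cF(f-\phi)(\l_0+i\l)(\eta)| + |\cF\phi(\l_0+i\l)(\eta)| < \varepsilon
\]
for all sufficiently large $|\l|$, completing the proof. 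The hypothesis $\l_0 \in S_r^\circ$ (the \emph{open} strip) enters essentially: it is precisely what guarantees finiteness of the $L^{r'}$-integral over $(0,\infty)$ in the proof of Proposition~\ref{prop:Eest}, and therefore the validity of the H\"older step.
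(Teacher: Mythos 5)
Your proposal is correct and follows essentially the same route as the paper: a H\"older estimate via Proposition~\ref{prop:Eest} to bound $|p_R(-\l_0-i\l)\,\cF (f-\phi)(\l_0+i\l)(\eta)|$ by $\|f-\phi\|_r$ times $(1+|\l|)^{d}\|\eta\|$, the Paley--Wiener bound of Proposition~\ref{prop:paleyweiner} for the smooth compactly supported approximant, and the observation that $|p_R(-\l_0-i\l)|$ grows like $|\l|^{d}$ to absorb the polynomial factor. The only (harmless) difference is bookkeeping: you divide by $p_R$ early to state a clean uniform bound before the density step, whereas the paper carries the $p_R$ factor through the triangle inequality and divides at the end.
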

\begin{proof}
    As a consequence of the proof of Proposition~\ref{prop:Eest} we have that for $\l_0 + i\l \in S_r^\circ \cap a^*(R)$ \[|p_R(-\l_0-i\l)\cF f(\l_0+i\l)\eta| \leq M' \|\eta\|(1+|\l_0+i\l|)^{\text{deg} p_R}\|f\|_r.\]
For $\epsilon>0$, get a $\phi \in C_c^\infty(K \backslash G/H)$ such that 
$\|f-\phi\|_r < \epsilon$. From Proposition~\ref{prop:paleyweiner} we obtain that for large enough $|i\l|$
\[|p(-\l_0-i\l)\cF\phi(\l_0+i\l)\eta| \leq M(1+|\l_0+i\l|)^{-n}\|\eta\| < \epsilon.\]
for some $\epsilon >0$. Thus, 
\begin{align*}
    &|p_R(-\l_0-i\l)\cF f(\l_0+i\l)\eta|\\ &\leq |p_R(-\l_0-i\l)(\cF f(\l_0+i\l)\eta - \cF \phi(\l_0+i\l)\eta)| 
    + |p_R(-\l_0-i\l)\cF \phi(\l_0+i\l)\eta|\\
    & \leq \|\eta\|(1+|\l_0+i\l|)^{\text{deg} p}\epsilon + \epsilon
\end{align*}
For large enough $|\l|$, $p_R(-\l_0-i\l)$ has no zeroes and thus, we obtain that \[\lim_{|\l|>0, |\l| \rightarrow \infty} |\cF f(\l_0 + i\l)\eta| = 0.\]
\end{proof}
\begin{remark}
If $r = 1$, we can include the boundary point $|\l_0| = \rho$ in the theorem above.   
\end{remark} 

%-------------------------------------------------------------------------------------------------------
\section{Fixed K-type}

In this section we will study fixed left $K$-type normalized Eisenstein integrals on pseudo- Riemannian real hyperbolic spaces $\mathrm{\mathrm{SO}}_e(p,q)/\mathrm{\mathrm{SO}}_e(p-1, q)$. Our aim is to characterise bounded fixed left $K$-type normalized Eisenstein integrals and then we prove Hausdorff-Young inequality in this case. To prove these results (among many thing) we first need the explicit formula of the $c$-function. Due to inability in obtaining this in general split rank one/rank one cases, we restrict ourselves to the pseudo-Riemannian real hyperbolic spaces. First, we give required preliminaries and then give an outline of the proof of the main theorems in this section. 

\vspace{.4cm}

\subsection{\bf Geometry and Structure of pseudo-Riemannian real hyperbolic spaces:}  
In this section we give the required preliminaries of the pseudo-Riemannian real hyperbolic spaces. Let $e_1 = (1,0,...,0)\in \mathbb R^{p+q}$. Then $H= \mathrm{\mathrm{SO}}_e(p-1,q)$ is identified as subgroup of $G$ which is the stabilizer group of $e_1$. Let $K = \mathrm{SO}(p) \times \mathrm{SO}(q)$, $B = S^{p-1} \times S^{q-1}$ and 
\[\rho = \frac{p+q -2}{2}.\] 
Let us consider the following spherical co-ordinates on $X$:
\[x(t,b) = \cosh t u + \sinh t v, \quad b = (u,v) \in B.\]
We have $\fg_+\cong \mathfrak{so}(p-1)\times \mathfrak{so}(1, q)$. For $1\leq i, j\leq p+q$, let $E_{ij}$ be the $(p+q)\times (p+q)$ matrix with $1$ on the $(i, j)$th entry and zero on all other entries. Let \bes
Y=E_{p+q, 1} + E_{1, p+q}.
\ees
Then $\aq=\R{} Y\cong \R{}$. Let $\Sigma(\aq, \fg)=\{\pm\alpha\}$, where $\alpha(Y)=1$ and the Weyl group is $W\cong \{\pm 1\}$. When $q>1$, $\Sigma(\aq, \fg_+)=\Sigma(\aq, \fg)$ and  $W_{K\cap H}\cong \{\pm 1\}$. Therefore, $\mathcal W=\{1\}$ for $q>1$.
But if $q=1$, $\Sigma(\aq, \fg_+)=\emptyset$  and $W_{K\cap H}\cong \{1\}$. Thus, if $q=1$ we have that $\mathcal W=\{\pm 1\}$. 

The $\sigma$-minimal parabolic subgroup $P$ of $G=\mathrm{SO}_{e}(p,q)$ is the stabilizer of the line $\R{}(1,0,...,0,1)$. Moreover,  $P=MAN$ where

\[A = \mr{exp}(\aq) = \left  \{a_t =\begin{pmatrix}
    \cosh{t} &0&\sinh{t}\\
    0&Id&0\\
    \sinh{t}&0&\cosh{t}
\end{pmatrix}: t\in \R{} \right\},\]

\[N = \mr{exp}(\fn) =\left\{ \begin{pmatrix}
    1+\frac{1}{2}\|v\|^2&v^t&-\frac{1}{2}\|v\|^2\\
    v&Id&-v\\
    \frac{1}{2}\|v\|^2&v^t&1-\frac{1}{2}\|v\|^2
\end{pmatrix}: v\in \R{p+q-2}\right\}\]
and 
\[M = \left\{\begin{pmatrix}
    \epsilon & 0 &0\\
    0& m & 0\\
    0 & 0& \epsilon
\end{pmatrix}: \epsilon = \pm 1, m \in \mathrm{SO}(p-1,q-1) \right\},\] for $q>1$, 
and  $\epsilon=1$ in the above description of $M$ for $q=1$.  Thus, there is only one $P$ open orbit on $X$ when $q>1$, which is $$\cO = \{x \in X: -x_1 + x_{p+q} \neq 0\}.$$ For $q=1$, there are two open $P$-orbits  on $X$ which are
\[\cO_1 := P\cdot e_n = \{x \in X : x_1 - x_{n+1} >0\},\] and \[\cO_2 = P \cdot -e_n =\{x \in X : x_1 - x_{n+1} <0\}.\] 
Also, we have $m_1^+=m_\alpha^+=q-1$, $m_1^-=m_\alpha^-=p-1, m_2^+=0$ and  $m_2^-=0$. Then the Jacobian $J$ (defined in (\ref{jacobian})) associated with the decomposition (\ref{decomposition}) becomes \begin{equation}\label{eq:Jacobian}
  J(t)=\cosh^{p-1}t \sinh^{q-1}t.  
\end{equation} 
Let $\Delta$ be the  Laplace-Beltrami operator on $X$. Then the radial part of the Laplace Beltrami operator is given by
\[L(\Delta) = \frac{1}{J(t)}\frac{d}{dt} (J(t) \frac{d}{dt}).\]

\vspace{.5cm}

We will now consider the two cases when $q>1$ and $q=1$ separately, as their structure and analysis are different. 

\vspace{.5cm}

\noindent{\bf Case when $p\geq1$ and $q>1$:} 
In this case, for $\eta \in \C{}$, we consider the function 
\[j(\l, \eta)(x) = \begin{cases}
    |x_1 - x_{p+q}|^{\l-\rho}\eta & if \;x \in \cO\\
    0 & if \; x \notin \cO.
\end{cases}\]
Observe that we can write $j(\l, \eta)(x) = \eta j(\l, 1)(x)$. 
%Thus, we drop the dependence of $\eta$ for this case (that is, for $q>1$) and write $j(\l, 1)(x) = j(\l)(x)$.
The Eisenstein integral is defined as 
\[E (\lambda,\eta)(x) = \int_K j(\l, \eta)(k^{-1}x) dk.\]

By \cite[Example 6.5]{HS94} and \cite[p. 162]{HS94}, we have that $E(\lambda, \eta)(a_t\cdot e_1)$ may have singularities at $\lambda \in \ia_{q}$ when $p > q+2$. The normalized Eisenstein integral $\eE$ becomes (from (\ref{eq:eisen}) and (\ref{eq: Re<0})),
\be \label{exp:normalized-eisenstein} E^0(\lambda, \eta)(a_t\cdot e_1) := \eta \,2^{\lambda-\rho}\frac{\Gamma(\frac{\lambda+\rho}{2})\Gamma(\frac{\lambda-\rho+q}{2})}{\Gamma(\lambda)\Gamma(\frac{q}{2})}{}_2F_1\Big(\frac{\lambda+\rho}{2}, \frac{-\lambda +\rho}{2};\frac{q}{2}; -\sinh^2{t}\Big).
\ee
 This does not have any singularities on $i\mathbb R$.
 In this case we have the following expression of $c$-function from Lemma \ref{lem:c-function}: \bes
c(\l)= 2^{2\lambda}\frac{\Gamma((\rho+\lambda)/2)\Gamma(-\lambda)\Gamma((\lambda-\rho+q)/2)}{\Gamma((\rho-\lambda)/2)\Gamma(\lambda)\Gamma((-\lambda-\rho+q)/2)}.
 \ees

\begin{comment}

Both $E$ and $E^0$ satisfy the differential equation 
\[L(\Delta)\phi = (\l^2-\rho^2)\phi.\]
%Singular set for normalized Eisenstein integral is 
%\[-a^*(R) = \{\lambda \in a^*_{\C{}} \mid Re\langle\lambda, \alpha\rangle > R, \forall \alpha \in \Sigma^+\}. \]

We denote \[\eE(\l,\eta) (t) := \eE(\l, \eta)(a_t \cdot e_1).\]
%It follows from the expression (\ref{exp:normalized-eisenstein}) that for $R>0$, there exists finitely many constants $d_1,...,d_n$ such that the singular set of $\lambda \mapsto \eE(\l,\eta) (t)$ is contained in $\{d_1, d_2, \cdots, d_n\}$ in $-a^*(R)$.
%\[\cup_{i=1}^n \{\lambda: \langle \lambda, \alpha\rangle = d_i\} \]
%Moreover, there exists a polynomial $p_R$ a product of first order polynomials of the form $\lambda- d_i$, such that  $p_R(\lambda)\eE(\l,\eta) (t)$ is holomorphic on $-a^*(R)$. The polynomial $p_R$ depends on $R$. 

\begin{proposition}\label{prop:relation} The following relation holds:
    \[\eE(\l,\eta) (t) = C^0(-1,\l)\eE(-\l,\eta) (t),\]
    where
    \[C^0(-1,\l) = 2^{2\lambda}\frac{\Gamma((\rho+\lambda)/2)\Gamma(-\lambda)\Gamma((\lambda-\rho+q)/2)}{\Gamma((\rho-\lambda)/2)\Gamma(\lambda)\Gamma((-\lambda-\rho+q)/2)}.\]
\end{proposition}
Moreover by \cite[Prop. 3.5]{A01} we have, 
    \bes 
    \eE(\l,\eta) (t) = \eta\,\Phi_\lambda(t) + \eta\, C^0(-1,\l)\Phi_{-\lambda}(t).\ees

 %We note that $p=1, q>1$ gives the Riemannian real hyperbolic spaces.

\vspace{.5cm}
\end{comment}
\noindent{\bf Case when $p> 1, q=1$:}  In this case we consider the semisimple symmetric space ${X}= \mathrm{\mathrm{SO}}_e(p,1)/\mathrm{\mathrm{SO}}_e(p-1,1)$ with $p\geq 1 1$.
Let $H= \mathrm{\mathrm{SO}}_e(p-1,1)$ and $K = \mathrm{SO}(p)\times \mathrm{SO}(1)$ and 
\[\rho = \frac{p -1}{2}.\]  We have $m_1^+=m_\alpha^+=0$ and $m_1^-=m_\alpha^-=p-1, m_2^+=0, m_2^-=0$. Then the Jacobian $J$ (defined in (\ref{jacobian})) associated with the decomposition (\ref{decomposition}) becomes \bes J(t)=\cosh^{p-1}t.\ees
Let us define the function  
\[j(\l, \eta)(x) = \begin{cases}
    |x_1- x_{p+1}|^{\l-\rho}\eta_i & \text{if}\; x \in \cO_i\\
    0 & \text{if}\; x \notin \cO_1 \cup \cO_2.
\end{cases}\]
for $\eta \in \C^{2}$.
The Eisenstein integral is then defined as
\[E(\l,\eta)(x) = \int_K j (\l,\eta)(k^{-1}x) dk.\]

The normalized Eisenstein integral  $\eE(\l,\eta) (x)$ is a regular function on $X$ for every $\eta \in \C^{2}$. Let $e_{1}= (1,...,0,0)$ and consider {the disjoint orbits} 
\[\{a_t \cdot e_{1} = \cosh t e_1 + \sinh t e_{p+1}:t>0\} \quad \text{and} \quad\{ a_t \cdot -e_1 =  -\cosh t e_1 - \sinh t e_{p+1}:t>0\}.\] We denote \[\eE_w(\l, \eta)(t) := \eE(\l,\eta)(a_tw \cdot e_1).\] Thus, for $w = 1$,  $\eE_1(\l, \eta)(t) = \eE(\l,\eta)(a_t \cdot e_1)$ and for $w=-1$, we have 
 $\eE_{-1}(\l,\eta)(t) = \eE(\l,\eta)(a_t \cdot -e_1)$.
The normalized Eisenstein integral $\eE$ becomes (from (\ref{eq:eisen}) and (\ref{eq: Re<0})),
\begin{equation}\label{eq:eisen12}
    \eE_w(\l,\eta)(t) =   \eta_w\frac{2^{\l-\rho}\Gamma((\l+\rho)/2)\Gamma((\l-\rho+1)/2)}{\Gamma(\l)\Gamma(1/2)}{}_2F_1\left(\frac{\rho+\l}{2}, \frac{\rho-\l}{2}; \frac{1}{2}; -\sinh^2 t\right).
\end{equation}
  Also in this case, we have the following expression of $c$ function from Lemma \ref{lem:c-function}:
\bes
c(\lambda)=2^{2\lambda}\frac{\Gamma((\rho+\lambda)/2)\Gamma(-\lambda)\Gamma((\lambda-\rho+1)/2)}{\Gamma((\rho-\lambda)/2)\Gamma(\lambda)\Gamma((-\lambda-\rho+1)/2)}.
\ees
\begin{Remark}
   We omit the case when $p=q=1$ because in this case the group $G=\mathrm{SO}_{e}(1, 1)$ is abelian and hence the general theory is not valid for this group.
\end{Remark}

Let $\delta_k$ be an irreducible representations of $\mathrm{SO}(p)$ on the space of spherical harmonics $\mathcal{H}_k$ of degree $|k|$ . If $p=1$, then $k=0$, for $p=2$, $k \in \mathbb{Z}$, and for $p>2$, $k \in \mathbb{N}_{0}$. Let $\delta_l$ be the irreducible representations of $\mathrm{SO}(q)$ and write $\delta_{k,l} = \delta_k \otimes \delta_l$ . Then $\delta_{k,l}$ are the irreducible representations of $K=\mathrm{SO}(p)\times \mathrm{SO}(q)$ on $\mathcal{H}_k \otimes \mathcal{H}_l$. Let $\chi_{k,l}$ be the character of the representation $\delta_{k,l}$.  We say that a function $f \in C_c^\infty(X)$ is of left $(k,l)$-type if \[ \chi_{k,l}*_K f  = f, \]
that is,
$$\int_{K} \chi_{k,l} (k) f(k^{-1}x)  dk = f(x), \qquad x \in G.$$
We denote the space of all left $(k,l)$-type smooth functions with compact support and $(k,l)$-type integrable functions by $C_c^\infty (X)^{k,l}$ and $L^1(X)^{k,l}$, respectively.

The $(k,l)$-th component of the Laplace-Beltrami operator  on $X$ is given by:

\begin{align*}
    \Delta^{k,l}f(t) =& \frac{1}{\sqrt{J(t)}}\left[\frac{\partial^2}{\partial t^2}((\sqrt{J(t)} f(t)) -  \frac{\partial^2}{\partial t^2}(\sqrt{J(t)})f(t)\right] \\
&+ k(k+p-2) \frac{1}{\cosh ^{2}t} f(t) + l(l+q-2) \frac{1}{\sinh ^{2}t} f(t).
\end{align*}

We  consider the following differential equation 
\[\Delta^{k,l} f = (\lambda^2 - \rho^2) f.\]

Then there exists a unique formal series (see \cite[p. 78]{A01} for $q>1$ and \cite[p.66]{VS97} for general case) given by 
\[\Phi_\lambda^{k,l} (t)= e^{(\lambda-\rho)t}\sum_{m=0}^\infty {\Gamma}^{k,l}_m(\lambda)e^{-mt},\]
for $t>0$, that satisfies the above differential equation with $\Gamma_m^{k,l}(\l) \in \C{}$ and $\Gamma^{k,l}_0(\l) =1$.

The normalized Eisenstein integral $\eE_{k,l}$ is an eigenfunction of the differential operator $\Delta^{k,l}$ that satisfies
\[\Delta^{k,l} \eE_{k,l}(\l,\eta)(t) = (\l^2-\rho^2)\eE_{k,l}(\l,\eta)(t).\]
%Moreover, it satisfies the following asymptotics:
%\[\lim_{t \rightarrow \infty}e^{(\rho-\l)t}\eE_{k,l}(\l,\eta)(a_tw) = \eta_w,\]
% where $\eta_w$ is the $w$-th coordinate in $\C^{\cW}$. 

%Recall that for $q=1$, $\cW = \{\pm 1\}$ and for $q>1$, $\cW = \{1\}$.
%Let $\cO_w$ be the open orbit $KA^+_q w H\cdot e_{1}$ for $w \in \cW$.
For $w \in \cW$, we write \[\eE_{k,l,w}(\lambda,\eta)(t): = \eE_{k,l}(\l,\eta)(a_t w\cdot e_{1}).\] We then have $\eE_{k,l,w}$ satisfies \be \label{laplace-kl}\Delta^{k,l} \eE_{k,l,w}(\l,\eta) = (\lambda^2-\rho^2)\eE_{k,l,w}(\l,\eta),\ee
 for any $w \in \cW$ and for almost all $\Re\l>0$,
 \[\lim_{t \rightarrow \infty}e^{(\rho-\l)t}\eE_{k,l,w}(\l,\eta)(t) = \eta_w.\] Furthermore, there exists an endomorphism  $C^{k,l}(\l) : \C^\cW \rightarrow \C^{\cW}$ that is meromorphically dependent on $\l$ such that
 \[\eE_{k,l,w}(\l,\eta)(t) = \Phi^{k,l}_\l(t)\eta_w + \Phi^{k,l}_{-\l}(t) [C^{k,l}(\l)\eta]_w, \quad \l \notin 1/2\mathbb{N}_{0}.\]
 We also have the following relation 
 \[\eE_{k,l,w}(-\l,\eta) = \eE_{k,l,w}(\l,C^{k,l}(\l)\eta).\]
 The above differential equation (\ref{laplace-kl}) can be transformed to a hypergeometric differential equation by substituting $z = -\sinh ^2t$. Thus, from the above asymptotics we obtain that
  \begin{align*}
     \eE_{k,l,w}(\l,\eta)(t) =& \eta_w2^{\l-\rho-|k|-|l|}\cosh^{|k|}t \sinh^{|l|}t\,\frac{\Gamma((\l+\rho+|k|+|l|)/2)\Gamma((\l-\rho-|k|+|l|)/2)}{\Gamma(\l)\Gamma(q/2 + |l|)}\\
     &\times {}_2F_1\left(\frac{\l+\rho+|k|+|l|}{2}, \frac{-\l+\rho+|k|+|l|}{2}; \frac{q}{2}+|l|; -\sinh ^2t\right).
 \end{align*}

By similar steps as in the proof of Lemma \ref{lem:c-function}  we get for $q>1$,
\[C^{k,l}(\l) = 2^{2\l} \frac{\Gamma(1/2(\l+\rho+|k|+|l|))\Gamma(1/2(\l-\rho+q-|k|+|l|))\Gamma(-\l)}{\Gamma(1/2(-\l+\rho+|k|+|l|))\Gamma(1/2(-\l-\rho+q-|k|+|l|))\Gamma(\l)},\]
and for $q=1$, $C^{k,l}(\l) = [c_{ij}(\l)]$ where

\begin{align*}
    c_{11}=c_{22} &= 2^{2\l} \frac{\Gamma(1/2(\l+\rho+|k|+|l|))\Gamma(1/2(\l-\rho+q-|k|+|l|))\Gamma(-\l)}{\Gamma(1/2(-\l+\rho+|k|+|l|))\Gamma(1/2(-\l-\rho+|k|+|l|))\Gamma(\l)},\\
    c_{12} = c_{21} &=0.
\end{align*}

Since $C^{k,l}$ is a diagonal matrix for $q=1$, by combining the case $q>1$ and $q=1$ we can write:
\[\eE_{k,l,w}(\l,\eta)(t) = \Phi^{k,l}_{\l}(t) \eta_w + c^{k,l}(\l)\eta_w\Phi_{-\l}^{k,l}(t),\]
where
\[c^{k,l}(\l) = 2^{2\l} \frac{\Gamma(1/2(\l+\rho+|k|+|l|))\Gamma(1/2(\l-\rho+q-|k|+|l|))\Gamma(-\l)}{\Gamma(1/2(-\l+\rho+|k|+|l|))\Gamma(1/2(-\l-\rho+|k|+|l|))\Gamma(\l)}.\]

 The function $\eE_{k,l,w}(\l,\eta)$ is of $(k,l)$-type.
\begin{definition}
    For $f \in C_c^\infty (X)^{k,l}$ we define the Fourier transform as 
    \begin{align*}
        \cF^{k,l}f(\l)(\eta) &= \int_X f(x)\eE_{k,l}(-\l,\eta)(x)dx\\
        &=\sum_{w \in \cW} \int_0^\infty f(a_tw.e_1) \eE_{k,l}(-\l,\eta)(a_tw\cdot e_1) J(t)dt,
    \end{align*}
    where $\l \in \fa^*_{q, \C}$.
\end{definition}
Fix an $R > 0$. Then there exists a polynomial $p_R^{k,l}$ such that $p_R^{k,l}(\l)\eE_{k,l,w}(\l,\eta)$ is holomorphic in $-a^*(R)$. Let \bes 
q_R(\l) = (\l-1/2)(\l-1)...(\l-\left \lfloor R/2 \right \rfloor).
\ees
Again from \cite[Thm 7.4]{VS97}, we have the following estimates:
\[|q_R(\l)\Gamma_m^{k,l}(\l)| \leq M (1+m)^\chi (1+|\l|)^{\text{deg}\, q_R},  \]
for some $\chi >0$,  $\l \in \overline{a^*(R)}$ and for all $m \in \mathbb{N}_{ 0}$.

 Let \bes \Psi_\lambda(t) = \sqrt{J(t)}\Phi_\lambda^{k,l}(t) = e^{\lambda t}\sum_{m=0}^{\infty} \ti{\Gamma}^{k,l}_m(\lambda)e^{-mt}.\ees 
Then $\Psi_\lambda$ satisfies that
\[\Psi_\lambda''(t) + (k(k+p-2)c(t)+l(l+q-2)s(t) - d(t))\Psi_\lambda(t) = (\lambda^2-\rho^2)\Psi_\lambda(t),\]
where \bes c(t) = \cosh ^{-2}t = \sum_{m=0}^\infty c_m e^{-mt},\ees 
\bes
s(t) = \sinh ^{-2}t = \sum_{m=0}^\infty s_m e^{-mt}, \ees and \bes
d(t) = J''/\sqrt{J} = \sum_{m=0}^\infty d_m e^{-mt}.\ees The constants are 
\begin{align*}
    d_0 = \rho^2, &\quad d_m = ((q-1)(q-3)+(-1)^m(p-1)(p-3))m,\\
    c_0 = 0, &\quad c_{2m} = 4(-1)^m m, \quad c_{2m+1} = 0,\\
    s_0 = 0, & \quad s_{2m} = 4m, \quad s_{2m+1} = 0.
\end{align*}

By substituting we obtain the following recursion formula:
\bes
    (\lambda^2 -\rho^2)\ti{\Gamma}_m^{k,l}=(\lambda-k)^2\ti{\Gamma}_m^{k,l} + k(k+p-2)\sum_{i=0}^m \ti{\Gamma}_i^{k,l} c_{m-i} + l(l+q-2) \sum_{i=0}^m \ti{\Gamma}_i^{k,l} s_{m-i} - \sum_{i=0}^m \ti{\Gamma}_i^{k,l} d_{m-i}.
    \ees
    That is,
    \bes
m(2\l-m)\ti{\Gamma}_m^{k,l}=\rho^2\ti{\Gamma}_m^{k,l} + k(k+p-2)\sum_{i=0}^m \ti{\Gamma}_i^{k,l} c_{m-i} + l(l+q-2) \sum_{i=0}^m \ti{\Gamma}_i^{k,l} s_{m-i} - \sum_{i=0}^m \ti{\Gamma}_i^{k,l} d_{m-i},
\ees
which implies that
\[  m(2\l-m)\ti{\Gamma}_m^{k,l}= k(k+p-2)\sum_{i=0}^{m-1} \ti{\Gamma}_i^{k,l} c_{m-i} + l(l+q-2) \sum_{i=0}^{m-1} \ti{\Gamma}_i^{k,l} s_{m-i} - \sum_{i=0}^{m-1} \ti{\Gamma}_i^{k,l} d_{m-i}.\]

Therefore, the poles of $\ti{\Gamma}_m^{k,l}$ lies in the set $\{1/2,1,..., m/2\}$. With 

\[\Gamma_m^{k,l}(\lambda) = \sum_{i=0}^{m}b_i\ti{\Gamma}_{m-i}^{k,l}(\lambda),\]

we obtain that the poles of $\Gamma^{k,l}_m \subseteq \{1/2,1,..., m/2\}$. 

{\begin{lemma}\label{lemma:singktype} The following holds true:
\begin{enumerate}
    \item The singularities of $\Gamma_m^{k,l}$ are at most simple poles in $\{1/2,1,...,m/2\}$.
    \item The singularities of $\Phi^{k,l}_{\l}$ lies in $1/2\mathbb{N}_{0}$.
    \item The poles of $c^{k,l}(\l)$ are $\{-\rho -|l|-|m|-2k_1:k_1 \in \mathbb{N}_{0}\} \cup \{\rho -q +|l|-|m| - 2k_2:k_2 \in \mathbb{N}_{0}\}\cup \{k_3 \in \mathbb{N}\}$.
    \item The poles of $\eE_{k,l,w}(\cdot,\eta)(t)$ are equal to $\{-\rho  -|l|-|m|-2k_1:k_1 \in \mathbb{N}_{0}\} \cup \{\rho -q +|l|-|m| - 2k_2:k_2 \in \mathbb{N}_{0}\}$.
    \item The zeros of $c^{k,l}(\l)$ are $\{\rho +|l|+|m|+2k_1:k_1 \in \mathbb{N}_{0}\} \cup \{-\rho +q -|l|+|m| + 2k_2:k_2 \in \mathbb{N}_{0}\}\cup \{-k_3 \in \mathbb{N}\}$
    \item  The zeros of $\eE_{k,l,w}(\cdot,\eta)(t)$  are $\{-k: k \in \mathbb{N}_{0}\}$.
\end{enumerate}
    
\end{lemma}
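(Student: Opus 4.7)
The plan is to parallel the proof of Lemma \ref{pole-zero-description}, extending each item from the trivial $K$-type to arbitrary $(k,l)$-types using the building blocks (series expansion, recursion, explicit $c$-function) already assembled above the statement.

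For (1), I would argue by strong induction on $m$. The base $m=0$ is trivial since $\ti{\Gamma}_0^{k,l}\equiv 1$. In the inductive step, the recursion displayed just above the lemma expresses $m(2\lambda-m)\ti{\Gamma}_m^{k,l}(\lambda)$ as a finite $\mathbb{C}$-linear combination of the $\ti{\Gamma}_i^{k,l}(\lambda)$ for $i<m$, each of which has at most simple poles in $\{1/2,\dots,(m-1)/2\}$ by the inductive hypothesis. Dividing by the simple zero of $m(2\lambda-m)$ at $\lambda=m/2$ produces at most a simple pole there, so $\ti{\Gamma}_m^{k,l}$ has only simple poles, contained in $\{1/2,\dots,m/2\}$. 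The identity $\Gamma_m^{k,l}(\lambda)=\sum_{i=0}^m b_i\,\ti{\Gamma}_{m-i}^{k,l}(\lambda)$ transfers the conclusion to $\Gamma_m^{k,l}$. Item (2) then follows by plugging (1) into the absolutely convergent series defining $\Phi_\lambda^{k,l}$.

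For (3) and (5), I would read off the poles and zeros directly from the explicit expression
\bes
c^{k,l}(\lambda)=2^{2\lambda}\,\frac{\Gamma(\tfrac{1}{2}(\lambda+\rho+|k|+|l|))\,\Gamma(\tfrac{1}{2}(\lambda-\rho+q-|k|+|l|))\,\Gamma(-\lambda)}{\Gamma(\tfrac{1}{2}(-\lambda+\rho+|k|+|l|))\,\Gamma(\tfrac{1}{2}(-\lambda-\rho+|k|+|l|))\,\Gamma(\lambda)}.
\ees
Since $\Gamma$ has only simple poles at non-positive integers and is nowhere vanishing, the simple poles of $c^{k,l}$ are produced by the three numerator Gamma factors, and its zeros by the three denominator factors. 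Solving $\text{(argument)}\in-\mathbb{N}_0$ for the numerator gives the three progressions listed in (3), and for the denominator gives the three progressions listed in (5); as the three arithmetic progressions in each list are generically disjoint from those in the other, no generic cancellation between numerator and denominator occurs.

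For (4) and (6), I would use the Harish--Chandra-type decomposition
\bes
\eE_{k,l,w}(\lambda,\eta)(t)=\Phi_\lambda^{k,l}(t)\,\eta_w+c^{k,l}(\lambda)\,\eta_w\,\Phi_{-\lambda}^{k,l}(t),
\ees
together with the functional equation $\eE_{k,l,w}(-\lambda,\eta)=\eE_{k,l,w}(\lambda,C^{k,l}(\lambda)\eta)$. By (2), the singularities of $\Phi_{\pm\lambda}^{k,l}$ lie in $\pm\tfrac{1}{2}\mathbb{N}_0$; by (5), the zeros of $c^{k,l}$ at the non-positive integers kill the singularities of $\Phi_{-\lambda}^{k,l}$ at those points, and the functional equation forces a matching cancellation of the $\Phi_\lambda^{k,l}$-singularities on the other side. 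What survives are precisely the two progressions of poles of $c^{k,l}$ from (3) that are not cancelled by zeros of $\Phi_{-\lambda}^{k,l}$, namely the progressions asserted in (4). For (6), the non-positive integers $-\mathbb{N}_0$ are common zeros of both terms (from the $1/\Gamma(\lambda)$ prefactor in the hypergeometric expression of the first term and from the listed zeros of $c^{k,l}$ in the second), producing the stated zero set. The main obstacle I expect is the careful bookkeeping of these cancellations: identifying which singularities of $\Phi_{\pm\lambda}^{k,l}$ are exactly cancelled by zeros of $c^{k,l}$, and verifying that no spurious poles or zeros survive. The regularity of $\eE_{k,l,w}(\cdot,\eta)$ on $i\mathfrak{a}_q^*$, together with the functional equation, are the essential tools for ruling out such pathologies.
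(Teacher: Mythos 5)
Your proposal is correct and follows essentially the same route as the paper, which states this lemma without a separate proof precisely because it is read off from the preceding discussion: the recursion for $\ti{\Gamma}_m^{k,l}$ and the relation $\Gamma_m^{k,l}=\sum_i b_i\ti{\Gamma}_{m-i}^{k,l}$ give (1)--(2), and the explicit Gamma-function expressions for $c^{k,l}$ and for $\eE_{k,l,w}$ give (3)--(6), exactly as you argue. Your treatment of the degenerate coincidences (e.g.\ integer $\rho$, where poles and zeros of the various Gamma factors can collide) is at the same level of generality as the paper, which defers those cases to the set $A$ in Theorem \ref{thm:ktypehj}.
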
}

Then we have the following analogue of Helgason-Johnson theorem for left $(k, l)$ type normalized Eisenstein integrals.
\begin{theorem}[Helgason-Johnson]\label{thm:ktypehj}
    Fix $R>0$, $w \in \cW$ and $\eta \in \C^{\cW}$.
   \begin{enumerate}
       \item Suppose $\rho$ is not an integer. 
       \begin{enumerate}
           \item If $\l_0 \in (S_1 \cap -a^*(R)) \cup -\bN_0$, then the function $t \rightarrow p_R^{k,l}(\l)\eE_{k,l,w}(\l,\eta)(t)$ is bounded on $(0,\infty)$.
           \item Conversely, for $\l_0 \in -a^*(R)$ and for non-zero $\eta_w$, if the function $t\mapsto p_R^{k,l}(\l_0)\eE_{k,l,w}(\l_0,\eta)(t)$ is bounded on $(0,\infty)$, then $\l_0 \in S_1  \cup -\mathbb N_0$. 
       \end{enumerate}
   \item Suppose $\rho$ is an integer. Let $A = \{\l \in \mathbb{Z} : \rho < \l \leq \rho -q +|l|-|m|\}$ and let $\l_0 \notin A$.
   \begin{enumerate}
       \item If $\l_0 \in (S_1 \cap -a^*(R)) \cup -\bN_0$, then the function $t \rightarrow p_R^{k,l}(\l)\eE_{k,l,w}(\l,\eta)(t)$ is bounded on $(0,\infty)$.
       \item Conversely, for $\l_0 \in -a^*(R)$ and for non-zero $\eta_w$, if the function $t\mapsto p_R^{k,l}(\l_0)\eE_{k,l,w}(\l_0,\eta)(t)$ is bounded on $(0,\infty)$, then $\l_0 \in S_1  \cup -\mathbb N_0$.
   \end{enumerate}
     
   \end{enumerate}
\end{theorem}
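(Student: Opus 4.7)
The plan is to adapt the proof strategy of Theorem~\ref{thm: HJO} to the fixed $(k,l)$-type setting. The essential ingredients are already in place: the Harish-Chandra-type expansion
\[
\eE_{k,l,w}(\l,\eta)(t) = \Phi^{k,l}_{\l}(t)\,\eta_w + c^{k,l}(\l)\,\eta_w\,\Phi_{-\l}^{k,l}(t),
\]
valid for generic $\l$, the pole/zero description in Lemma~\ref{lemma:singktype}, and the polynomial estimate $|q_R(\l)\Gamma_m^{k,l}(\l)| \leq M(1+m)^\chi(1+|\l|)^{\deg q_R}$ on $\overline{a^*(R)}$. I set $F^{k,l}_w(\l,\eta)(t) := p_R^{k,l}(\l)\eE_{k,l,w}(\l,\eta)(t)$, which is holomorphic on $-a^*(R)$. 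To get a series expansion uniformly valid at every $\l_0 \in -a^*(R)$ (including at poles of the individual summands), I apply the same derivative trick as in Theorem~\ref{thm: HJO}, writing
\[
F^{k,l}_w(\l_0,\eta)(t) = \der\Big|_{\l=\l_0}\,(\l-\l_0)\,F^{k,l}_w(\l,\eta)(t),
\]
and then expanding termwise via the series for $\Phi^{k,l}_{\pm\l}$.

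Next I split into cases based on $\Re\l_0$. For $\Re\l_0 > 0$, the $\Phi^{k,l}_{\l_0}$ term dominates, and using Cauchy's integral formula on a small circle (together with the estimate of $\Gamma^{k,l}_m$) the coefficients of $t^je^{(\l_0-\rho)t}$ and the contributions from $\Phi^{k,l}_{-\l_0}$ are bounded by $M(1+m)^\chi(1+|\l_0|)^d$, yielding
\[
\bigl|F^{k,l}_w(\l_0,\eta)(t)e^{(\rho-\l_0)t} - \eta_w p_R^{k,l}(\l_0)\bigr| \longrightarrow 0
\]
as $t\to\infty$. Hence when $p_R^{k,l}(\l_0)\neq 0$ we have $|F^{k,l}_w(\l_0,\eta)(t)| \asymp |\eta_w p_R^{k,l}(\l_0)|\,e^{(\Re\l_0-\rho)t}$, which is bounded exactly when $\Re\l_0\leq\rho$; the intermediate range $0<\Re\l_0<\rho$ is handled directly by summing the termwise bound $|p_m(t)| \leq M|\eta_w|(1+|\l_0|)^d(1+m)^\chi(1+t)e^{(\Re\l_0-\rho)t}e^{-mt}$ and invoking inequality~\eqref{ineq: Gamma}. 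For $\l_0\in -\bN_0$, Lemma~\ref{lemma:singktype}(6) gives that $\l_0$ is a zero of $\eE_{k,l,w}$, so $F^{k,l}_w(\l_0,\eta)\equiv 0$. For $\Re\l_0<0$ with $\l_0\notin-\bN_0$, the dual expansion yields
\[
\bigl|F^{k,l}_w(\l_0,\eta)(t)e^{(\l_0+\rho)t} - \eta_w h(\l_0)\bigr| \longrightarrow 0,
\]
where $h(\l) = p_R^{k,l}(\l)c^{k,l}(\l)$. Thus the function is bounded precisely when $h(\l_0)\neq 0$ and $\Re\l_0\geq -\rho$, or when $h(\l_0)=0$.

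The main obstacle, and the reason for the bifurcation into cases (1) and (2), is the collision of zeros and poles of $c^{k,l}(\l)$ with integer points. When $\rho\notin\mathbb{Z}$, the three pole families of $c^{k,l}$ in Lemma~\ref{lemma:singktype}(3) are pairwise disjoint from the zero families in (5), and the cancellation structure is clean: $p_R^{k,l}$ cancels exactly the singularities of $\eE_{k,l,w}$ (namely the first two families), leaving $h(\l_0)\neq 0$ on the critical boundary $\Re\l_0=-\rho$ and on $\Re\l_0=\rho$. When $\rho\in\mathbb{Z}$, however, the pole family $\{\rho-q+|l|-|m|-2k_2\}$ can push poles into the region $\Re\l>\rho$; these are precisely the elements of $A$, and for $\l_0\in A$ the factor $p_R^{k,l}(\l_0)$ may vanish to an order that cancels the expected growth of $\Phi^{k,l}_{\l_0}$, producing anomalous boundedness. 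Excluding $\l_0\in A$ ensures the asymptotic $|F^{k,l}_w(\l_0,\eta)(t)|\asymp|\eta_w p_R^{k,l}(\l_0)|e^{(\Re\l_0-\rho)t}$ remains genuine, so the same characterization holds. Putting the four subcases together gives $\l_0\in S_1\cup -\bN_0$ as the necessary and sufficient condition, establishing both directions of the theorem.
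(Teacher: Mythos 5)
Your proposal follows essentially the same route as the paper's own (outline of a) proof: the same function $F^{k,l}_w$, the same derivative trick and termwise estimates imported from the $K$-invariant case, the same asymptotics for $\Re\l_0>0$ and $\Re\l_0<0$, and the same four-case split ending with the zero set $-\bN_0$. The only difference is cosmetic: your gloss on why $A$ must be excluded (order of vanishing of $p_R^{k,l}$ cancelling the expected growth) differs slightly from the paper's remark, which attributes the breakdown to $c^{k,l}$ acquiring a double pole where $\eE_{k,l,w}$ has only a simple one, but since both arguments exclude $A$ by hypothesis this does not affect the proof.
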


\begin{remark}
The set $A$ is non-empty if and only if $q < |l|-|m|$. For the case when $\l_0 \in A$ and $\rho$ is an integer, the function $\eE_{k,l,w}(\l,\eta)$ has a simple pole at $\l_0$ but $c^{k,l}(\l)$ has a double pole at $\l_0$.  Thus, the earlier techniques do not work in this case. 
\end{remark}

\begin{proof}[Proof of Theorem~\ref{thm:ktypehj}]
Proof of this theorem is similar to the $K$-invariant case. Thus, we give only an outline of the proof. 
Fix an $R > 0$. Let $p_R^{k,l}(\l)$ be the polynomial such that the function $F^{k,l}_w(\l,\eta) := p_R^{k,l}(\l)\eE_{k,l,w}(\l,\eta)(t)$ is holomorphic in $-\fa_q^*(R)$. Then
\begin{align*}
    F^{k,l}_w(\l_0,\eta)(t) &= \der \big|_{\l=\l_0} \eta_w \sum_{m=0}^\infty \Big[e^{(\l-\rho)t}(\l-\l_0)p_R^{k,l}(\l)\Gamma^{k,l}_m(\l)\\
    &+ e^{(-\l -\rho)t}(\l-\l_0)p_R^{k,l}(\l)c^{k,l}(\l)\Gamma_m^{k,l}(-\l)\Big]e^{-mt}.
\end{align*}

For $\l_0 \in -a^*(R) $, by following similar steps and estimates as in the case of $K$-invariant we obtain that 
\begin{equation}\label{eq:a}
    \begin{split}
        |F^{k,l}_w(\l_0,\eta)e^{(-\l_0+\rho)t}-\eta_wp_R^{k,l}(\l_0)|& \rightarrow 0 \qquad \Re \l_0 >0,\\
|F^{k,l}_w(\l_0,\eta)e^{(\l_0+\rho)t}-\eta_wp_R^{k,l}(\l_0)c^{k,l}(\l_0)|& \rightarrow 0 \qquad \Re \l_0 <0.
    \end{split}
\end{equation}

Moreover, we obtain the following estimate analogous to the case of $K$-invariant:
\begin{equation}\label{eq:k,l}
    |F^{k,l}_w(\l_0,\eta)| \leq M \|\eta\|(1+|\l|)^d(1+t)e^{(|\Re \l_0| -\rho)t}.
\end{equation}

\noindent{\bf Case 1. $-\rho < \Re \l_0 < \rho$:} Using the above estimates we get that $F_w^{k,l}(\l_0,\eta)(t)$ is bounded in $t>0$.\\

{\bf Case 2. $|\Re \l_0| = \rho$:}
From asymptotics (\ref{eq:a}), we have 
\[|F_w^{k,l}(\l_0,\eta)| \leq M_{\l_0,\eta_w}e^{(|\Re \l_0| -\rho)}t\]
Thus, if $|\Re{\l_0}| = \rho$, then $F_w^{k,l}(\l_0,\eta)$ is bounded for $t>0$.\\

\noindent{\bf Case 3. $|\Re \l_0|>\rho$:} Let $\l_0 \notin A$ and $|\Re \l_0|>\rho$ with $\l_0 \notin -\mathbb{N}_0$.  Then, $p(\l_0) \neq 0$ and $p_R(\l_0)C^{k,l}(\l_0)\neq0$. Thus,
we have that 
\[|F^{k,l}_w(\l_0,\eta)(t)| \asymp |\eta_w |e^{(|\Re \l_0| -\rho)t},\]
\[|F^{k,l}_w(\l_0,\eta)(t)| \asymp |\eta_w p_R^{k,l}(\l_0)|e^{(|\Re \l_0| -\rho)t},\]
implying that it is unbounded.\\
\noindent{\bf Case 4: $\l_0 \in -\mathbb{N}_0$}. In this case, from Lemma~\ref{lemma:singktype} we have that $p_R^{k,l}(\l_0)\eE_{k,l}(\l_0,\eta)$ is identically zero for $t>0$ when $\l_0 \in -\bN_0$. \\
\end{proof}

 As a consequence of Helgason-Johnson theorem and the estimate (\ref{eq:k,l}) it follows that the Fourier transform $\l \mapsto \cF^{k,l}f(\l)\eta$ of $f \in L^1(X)^{k,l}$ extends meromorphically to $S_1$ and $p_R^{k,l}(\lambda)\cF^{k,l}f(\l)\eta$ is holomorphic in the interior $S_1^\circ \cap a^*(R)$. It also follows that if $\phi \in C_c^\infty(X)^{k,l}$, then there exists a constant $M$  such that for any $n \in \mathbb{N}_0$
\[|p(-\l)\cF^{k,l} \phi(\l)(\eta)| \leq M (1+|\l|)^{-n}e^{r|\Re \l|}\|\eta\|, \quad \l \in a^*(R)\]
where $r>0$ depends on the support of $\phi$.
Furthermore, we have the following theorems and consequences:
\begin{theorem}\label{thm:HYktype}
    Let $\l_0  \in S_r \cap a^*(R)$ be a real number.  Then
	\[\left( \int_{ \mathbb R}\left|\frac{p_R(-(\l_0 + i\l))\cF^{k,l} f(\l_0 + i\l)\eta}{(1+\l_0 + \l)^d}\right|^{r'} d\l \right)^{1/r'} \leq M \|f\|_r \|\eta\|, \quad {f \in L^r(X)^{k,l}}, \]
    for some $M,d > 0$ and for any $\eta \in \C{}^{\cW}$.
\end{theorem}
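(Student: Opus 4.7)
The proof runs in close parallel to that of Theorem~\ref{thm:HY}: it is obtained by setting up an analytic family of operators and applying Stein's complex interpolation between an $L^1$--$L^\infty$ endpoint at $r=1$ and the Plancherel estimate at $r=2$. I would first establish these two endpoint bounds and then perform the interpolation.

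For the $r=1$ endpoint I would prove the $(k,l)$-type analogue of Lemma~\ref{lemma: H-YL1}: for any real $\l_0\in S_1\cap a^*(R)$, any $\eta\in\C^{\cW}$ and any $f\in L^1(X)^{k,l}$,
\[
\sup_{\nu\in\mathbb R}\left|\frac{p_R^{k,l}(-(\l_0+i\nu))\cF^{k,l}f(\l_0+i\nu)\eta}{(1+\l_0+i\nu)^d}\right|\leq M\,\|f\|_1\,\|\eta\|.
\]
This follows by writing out the definition of $\cF^{k,l}f$, pulling the absolute value inside the integral and invoking Theorem~\ref{thm:ktypehj}: for $\l\in S_1\cap -a^*(R)$ the function $t\mapsto p_R^{k,l}(\l)\eE_{k,l,w}(\l,\eta)(t)$ is uniformly bounded on $(0,\infty)$. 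The uniform-in-$\l$ estimate (\ref{eq:k,l}) supplies the polynomial factor $(1+|\l|)^d$ that is then absorbed by the denominator. Note that the exceptional set $A$ from Theorem~\ref{thm:ktypehj} sits in $\{\l:\Re\l>\rho\}$, while $S_r\subset\{|\Re\l|\leq\rho\}$ for all $r\in[1,2]$, so this pathology never occurs in the interpolation region.

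For the $r=2$ endpoint I would use the Plancherel estimate (\ref{Plncherel}) restricted to the $(k,l)$-isotypic subspace. Since $p_R^{k,l}$ has no zeros on $i\mathbb R$, the ratio $|p_R^{k,l}(-i\nu)|/|1+i\nu|^d$ is uniformly bounded on the imaginary axis, giving
\[
\int_{i\mathbb R}\left|\frac{p_R^{k,l}(-i\nu)\cF^{k,l}f(i\nu)\eta}{(1+i\nu)^d}\right|^2 d\nu\leq C\,\|f\|_{L^2(X)}^2\,\|\eta\|^2.
\]
With $\mu=(2/r-1)^{-1}\l_0$ I would then define, for $f\in C_c^\infty(X)^{k,l}$,
\[
T_zf(i\nu)(\eta)=\frac{p_R^{k,l}(-i\nu-z\mu)\cF^{k,l}f(i\nu+z\mu)\eta}{(1+i\nu+z\mu)^d},\qquad 0\leq\Re z\leq 1.
\]
The polynomial $p_R^{k,l}$ cancels every singularity of $\cF^{k,l}f$ inside $a^*(R)$, so $T_z$ is holomorphic in $0<\Re z<1$ and continuous on the closed strip. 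The logarithmic growth bound needed for Stein's theorem is obtained by testing against $g\in C_c^\infty(i\fa_q)$ and using the $L^1$ endpoint just established, exactly as in the proof of Theorem~\ref{thm:HY}. Interpolation at $\theta_r=2/r-1$ yields the inequality for $f\in C_c^\infty(X)^{k,l}$, and a density argument extends it to $L^r(X)^{k,l}$.

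The main obstacle is the verification of the $L^2$ endpoint with the correct polynomial weight: this requires knowing that $p_R^{k,l}$ has no zeros on $i\fa_q$ (clear from Lemma~\ref{lemma:singktype} and the explicit form of $p_R^{k,l}$), and that the restriction of the $L^2$ Fourier inversion on $X$ to the $K$-isotypic component $L^2(X)^{k,l}$ still satisfies the Plancherel-type bound (\ref{Plncherel}). The rest of the argument is a routine adaptation of the $K$-invariant proof, with (\ref{eq:k,l}) playing the role of (\ref{est:Jacobi}) and Theorem~\ref{thm:ktypehj} replacing Theorem~\ref{thm: HJO}.
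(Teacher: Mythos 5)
Your proposal is correct and follows essentially the same route as the paper, which simply carries over the proof of Theorem~\ref{thm:HY} to the $(k,l)$-isotypic setting: the $L^1$ endpoint from the $(k,l)$-type Helgason--Johnson theorem together with the estimate (\ref{eq:k,l}), the Plancherel $L^2$ endpoint, and Stein's complex interpolation applied to the analytic family $T_z$. Your observation that the exceptional set $A$ lies in $\{\Re\l>\rho\}$ and hence outside $S_r$ is a worthwhile sanity check that the paper leaves implicit.
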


\begin{theorem}
     Let $1 \leq r <2$, $\theta = 2/r - 1$ and $\l_0 + i\l \in S_r^\circ \cap a^*(R)$. Then for any $f \in L^r(X)^{k,l}$,
    \[\lim_{|\l|>0, |\l| \rightarrow \infty} |\cF^{k,l} f(\l_0 + i\l)\eta| = 0.\]
\end{theorem}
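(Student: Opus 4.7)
The plan is to mirror the proof of the Riemann-Lebesgue lemma already established in the $K$-invariant setting, substituting the $(k,l)$-type counterparts of the key ingredients. Specifically, I would first establish the $(k,l)$-type analogue of Proposition \ref{prop:Eest}: for $\lambda \in S_r^\circ \cap (-a^*(R))$, $w \in \cW$ and $\eta \in \C^{\cW}$, the function $t \mapsto p_R^{k,l}(\lambda)\eE_{k,l,w}(\lambda,\eta)(t)$ lies in $L^{r'}((0,\infty), J(t)\,dt)$ with $L^{r'}$-norm bounded by $M(1+|\lambda|)^{\deg p_R^{k,l}}\|\eta\|$. Indeed, inserting the estimate (\ref{eq:k,l}) produces an integrand of order $e^{r'(|\Re\lambda|-\rho)t}e^{2\rho t}$, which is integrable precisely when $|\Re\lambda|<(2/r-1)\rho$. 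Hölder's inequality applied to the integral defining $\cF^{k,l}f(\lambda)(\eta)$ then yields the pointwise bound
\[
|p_R^{k,l}(-\lambda)\cF^{k,l}f(\lambda)(\eta)| \leq M'\|\eta\|(1+|\lambda|)^{\deg p_R^{k,l}}\|f\|_r, \qquad \lambda \in S_r^\circ \cap a^*(R).
\]

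For a given $\epsilon>0$, I would choose $\phi \in C_c^\infty(X)^{k,l}$ with $\|f-\phi\|_r<\epsilon$ using the density of $C_c^\infty(X)^{k,l}$ in $L^r(X)^{k,l}$. Applying the bound above to $f-\phi$ controls the tail contribution, while the $(k,l)$-type Paley-Wiener estimate recorded just before Theorem \ref{thm:HYktype} provides, for any $n \in \mathbb{N}_0$,
\[
|p_R^{k,l}(-(\lambda_0+i\lambda))\cF^{k,l}\phi(\lambda_0+i\lambda)(\eta)| \leq M_n(1+|\lambda_0+i\lambda|)^{-n}\|\eta\|,
\]
which is $\leq \epsilon$ once $|\lambda|$ is sufficiently large. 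The triangle inequality then gives, for $|\lambda|$ large,
\[
|p_R^{k,l}(-(\lambda_0+i\lambda))\cF^{k,l}f(\lambda_0+i\lambda)(\eta)| \leq M'\|\eta\|(1+|\lambda_0+i\lambda|)^{\deg p_R^{k,l}}\epsilon + \epsilon.
\]

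Since $p_R^{k,l}$ is a polynomial with only finitely many zeros, for $|\lambda|$ large the ratio $(1+|\lambda_0+i\lambda|)^{\deg p_R^{k,l}}/|p_R^{k,l}(-(\lambda_0+i\lambda))|$ is bounded while $1/|p_R^{k,l}(-(\lambda_0+i\lambda))|\to 0$. Dividing through by $|p_R^{k,l}(-(\lambda_0+i\lambda))|$ therefore yields $|\cF^{k,l}f(\lambda_0+i\lambda)(\eta)| \leq C\|\eta\|\epsilon + o(1)$ as $|\lambda|\to\infty$, and since $\epsilon>0$ is arbitrary, the limit is zero. The proof is essentially a transcription of the $K$-invariant argument, so no genuinely new obstacle arises; the only mild point requiring attention is the $L^{r'}$-integrability of the $(k,l)$-type normalized Eisenstein integral, which follows immediately from estimate (\ref{eq:k,l}). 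All other inputs — the Paley-Wiener estimate and the density of $C_c^\infty(X)^{k,l}$ — are already available in the excerpt.
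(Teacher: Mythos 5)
Your proposal is correct and follows exactly the route the paper intends: the paper itself only remarks that the proof ``follows similarly as in the case of $K$-invariant functions,'' and your argument is precisely that transcription, with the $(k,l)$-type analogue of Proposition \ref{prop:Eest} obtained from the estimate (\ref{eq:k,l}), the Paley--Wiener bound for $\phi\in C_c^\infty(X)^{k,l}$, and the density plus triangle-inequality step. Nothing further is needed.
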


The proof of the above theorems follows similarly as in the case of $K$-invariant functions. 

\begin{remark}
It is known that, for $\alpha,\beta \geq -\frac12$, the Jacobi function
$\varphi^{(\alpha,\beta)}_{-i\lambda}(t)$ is bounded as a function of $t$
whenever $\lambda \in S_1$. Moreover, when
$-\frac12 \leq \alpha \leq \beta$, this condition is also necessary; that is,
$\varphi^{(\alpha,\beta)}_{-i\lambda}(t)$ is bounded in $t$ if and only if
$\lambda \in S_1$. In the pseudo-Riemannian real hyperbolic case
$SO_e(p,q)/SO_e(p-1,q)$, the left $K$-invariant Eisenstein integral is given by
\[
\eE_w(\lambda,\eta)(t)
=
\eta_w\,2^{\lambda-\rho}
\frac{
\Gamma\left(\frac{\lambda+\rho}{2}\right)
\Gamma\left(\frac{\lambda-\rho+q}{2}\right)
}{
\Gamma(\lambda)\Gamma\left(\frac{q}{2}\right)
}
\varphi_{-i\lambda}^{(\alpha,\beta)}(t),
\]
where
\[
\alpha=\frac q2-1,
\qquad
\beta=\frac p2-1,
\]
with $p>1$, $q\geq 1$, or $p\geq 1$, $q>1$. By the Helgason--Johnson theorem \ref{thm:ktypehj}, the Eisenstein integrals
are unbounded in $t$ whenever $\operatorname{Re}\lambda>\rho$. Since
$\eE_w(\lambda,\eta)$ has no pole in this region, the above formula implies
that the Jacobi function
$\varphi^{(\alpha,\beta)}_{-i\lambda}(t)$ is also unbounded in $t$ for
$\Re\lambda>\rho$. Finally, since Jacobi functions are even in
the spectral parameter $\lambda$, the same conclusion holds for
$\Re\lambda<-\rho$. Consequently, for
$\alpha\geq 0,\quad \beta\geq -\frac12,$ or $
\alpha\geq -\frac12,\quad \beta\geq 0,$
the Jacobi function $\varphi^{(\alpha,\beta)}_{-i\lambda}(t)$ is bounded in
$t$ if and only if
\[
|\Re\lambda|\leq \rho.
\] 
Thereby extending the known boundedness result of Flensted-Jensen and Koornwinder \cite{FT73} for Jacobi functions to the above parameter range for integers and half integers $\alpha$ and $\beta$.
\end{remark}

\section*{Acknowledgement}We are thankful to the anonymous referee for many valuable suggestions, which have improved the article.
%---------------------------------------------------------------------------------------------

%----------------------------------------------------------------------------------------------------------------------
\bibliographystyle{plain} % We choose the "plain" reference style
\bibliography{refs}

\end{document}